\DeclareMathOperator{\outdeg}{out-deg}
\DeclareMathOperator{\indeg}{in-deg}
\renewcommand{\P}{\ensuremath{\mathcal{P}}}
\newtheorem{theorem}{Theorem}
\newtheorem{corollary}[theorem]{Corollary}
\newtheorem{lemma}[theorem]{Lemma}
\begin{document}

\title{Splitting Planar Graphs of Girth $6$ into Two Linear Forests with Short Paths.}

\author{Maria Axenovich}
\author{Torsten Ueckerdt}
\author{Pascal Weiner}
\affil{Karlsruhe Institute of Technology, Germany}
\renewcommand\Authands{ and }

\maketitle

\begin{abstract}
 Recently, Borodin, Kostochka, and Yancey (On $1$-improper $2$-coloring of sparse graphs. \textit{Discrete Mathematics}, 313(22), 2013) showed that the vertices of each planar graph of girth at least $7$ can be $2$-colored so that each color class induces a subgraph of a matching. 
 We prove that any planar graph of girth at least $6$ admits a vertex coloring in $2$ colors such that
 each monochromatic component is a path of length at most $14$.
 Moreover, we show a list version of this result.
 On the other hand, for each positive integer $t\geq 3$, we construct a planar graph of girth $4$ such that in any coloring of vertices in $2$ colors there is a monochromatic path of length at least $t$. 
 It remains open whether each planar graph of girth $5$ admits a $2$-coloring with no long monochromatic paths.
\end{abstract}

 \section{Introduction}

In this paper, we consider the question of partitioning the vertex set of a planar graph into a small number of parts such that each part induces a graph whose connected components are short paths.
Equivalently, we consider vertex-colorings of planar graphs such that each monochromatic component is a  short path.
The length of a path is the number of its edges.
The Four Color Theorem~\cite{4CT1,4CT2} implies that four parts are sufficient to guarantee such a partition with paths of length $0$, i.e., on $1$ vertex each.
A result of Poh~\cite{Poh90} shows that any planar graph can be vertex-colored with $3$ colors such that each monochromatic component is a path. 
Chappel constructed an example of a planar graph whose largest induced subgraph with path components has   at most $4/9n$ vertices, where $n$ is the total number of vertices, see~\cite{Pel} for the construction.
This shows in particular, that the result of Poh is tight.
However, one can not restrict the lengths of monochromatic paths in $3$-colorings of planar graph as was shown by a specific triangulation construction of Chartrand, Geller and Hedetniemi~\cite{CGH68}. 
However, when the girth of a planar graph is sufficiently large, one can not only $3$-color, but $2$-color the vertices of the graph such that monochromatic components are short paths.
Borodin, Kostochka, and Yancey~\cite{BKY13} proved that the vertices of each planar graph of girth at least $7$ can be $2$-colored so that each monochromatic component has at most $2$ vertices, i.e., is a path of length at most $1$.
Note that the order of monochromatic components can not be decreased to $1$ as long as the graph is not bipartite.
In an earlier paper, Borodin and Ivanova~\cite{BI11} conjectured that any planar graph of girth $6$ can be $2$-colored such that each monochromatic component is a path of length at most $2$.

Here, we show that planar graphs of girth at least $6$ can be $2$-colored such that each monochromatic component is a path of length at most $14$.
Moreover, we prove a list version of this result.
On the other hand, for each positive integer $t\geq 3$, we construct a planar graph of girth $4$ such that in any coloring of vertices in $2$ colors there is a monochromatic path of length at least $t$. 

It remains open whether one can $2$-color the vertices of a planar graph of girth $5$ such that each monochromatic component is a short path.

Note that the problem we consider is a problem of {\it strong linear arboricity} or a {\it $k$-path chromatic number} introduced by Borodin \textit{et al.}~\cite{BI11} and Akiyawa \textit{et al.}~\cite{AEGW89} respectively.
Here, a linear arboricity of a graph is the smallest number of parts in a vertex-partition of the graph such that each part induces a forest with path components.
The $k$-strong linear arboricity or {\it $k$-path chromatic number} is the smallest number of colors in a vertex-coloring of the graph such that each monochromatic component is a path on at most $k$ vertices.

Let $L$ be a color list assignment for vertices of a graph $G$, i.e., $L: V(G) \rightarrow 2^{\mathbb Z}$.
We say that $c$ is an $L$-coloring if $c: V\rightarrow \mathbb{Z}$ such that $c(v)\in L(v)$ for each $v\in V(G)$. 

We prove the following theorems.

\begin{theorem}\label{thm:girth-6}
 For any planar graph of girth at least $6$ and any list assignment $L$ with lists of size $2$
 there is an $L$-coloring so that each monochromatic component is a path of length at most $14$.
\end{theorem}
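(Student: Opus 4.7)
The plan is to use the standard approach for problems of this flavor: minimum counterexample combined with discharging, where the girth-$6$ condition (together with Euler's formula) provides the global charge deficit, and carefully chosen reducible configurations let us forbid the local structures one would need to cancel that deficit.

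First I would take $(G,L)$ to be a counterexample minimizing $|V(G)|$ (or the pair $(|V(G)|,|E(G)|)$ lexicographically). The initial reductions should establish elementary degree conditions: $\delta(G)\geq 2$ is immediate, since a degree-$1$ vertex $v$ can be deleted, the smaller graph $L$-colored by minimality, and $v$ re-colored from its $2$-list so that the monochromatic path through $v$'s unique neighbor grows by at most one and hence stays well within length $14$. Next I would bound the length of \emph{threads} (maximal paths whose internal vertices have degree~$2$ in $G$): if a thread is sufficiently long, contract it, color the reduced graph by minimality, and then re-color the internal vertices of the thread so that the two monochromatic pieces meeting the two thread endpoints have total length at most $14$. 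A thread of length $\ell$ colored with $2$ colors can be split into monochromatic pieces of any prescribed pattern, so one has great freedom; the concrete upper bound on $\ell$ (something like $6$ or $7$) is exactly what yields the constant $14$ in the statement.

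Then I would run discharging. Assign initial charge $\mu(v)=2\deg(v)-6$ to each vertex and $\mu(f)=|f|-6$ to each face; Euler's formula together with girth $\geq 6$ gives $\sum_v\mu(v)+\sum_f\mu(f)=-12$, and every face now has non-negative initial charge. Only degree-$2$ vertices have negative charge, namely $-2$. The discharging rule I would try first is: every face $f$ sends charge $\tfrac{2|f|_2}{|f|}$-style to its incident degree-$2$ vertices, or more concretely, each face sends charge $1$ along each boundary edge whose one endpoint has degree $2$. Using the structural thread bound from the previous step, a face of length $\ell$ contains only a controlled fraction of degree-$2$ vertices on its boundary, so the total charge it must give away is at most $\ell-6=\mu(f)$. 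Adding the fact that $\delta(G)\geq 2$ rules out the degenerate cases, I would conclude that every vertex and every face ends with non-negative charge, contradicting the total $-12$.

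The main obstacle, as usual with such arguments, is identifying the right reducible configurations and verifying them. Reducibility of a long thread is easy in the list-chromatic sense but delicate with the path-length-$14$ constraint: when re-inserting colors for the removed segment, one has to accommodate \emph{both} endpoint colors and the already-colored monochromatic neighborhoods beyond those endpoints. This forces a small case analysis on all possible pairs of boundary conditions at the two ends of the thread, and the precise value $14$ will be the smallest bound that makes every such case go through while still allowing the discharging inequalities to balance. A secondary obstacle is that pure degree-based reducibility is usually not enough: one typically also needs mixed configurations (e.g.\ a degree-$3$ vertex all of whose neighbors have degree $2$, or a short face carrying many degree-$2$ vertices), and balancing the length $14$ against what discharging can absorb is the delicate combinatorial fit that drives the whole argument.
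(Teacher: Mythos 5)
There is a genuine gap, and it is exactly the gap that forces the paper's proof to look the way it does. Your plan is the standard ``local reducible configurations plus face-to-vertex discharging'' scheme, but in this problem the configurations you would need to exclude are not locally reducible. The troublesome vertices are not long threads of degree-$2$ vertices (those are in fact easy: delete the thread, color inward from both ends, and any color clash is buried inside the thread as a short path); the troublesome structures are \emph{single} degree-$2$ vertices and, above all, degree-$3$ vertices. A degree-$3$ vertex carries zero charge under any of the usual normalizations, and you cannot delete or recolor it locally, because with a length-$14$ constraint (unlike a defect/maximum-degree constraint) reinserting one vertex can concatenate two already-long monochromatic paths; this is precisely why the Havet--Sereni reduction ``every edge meets a vertex of degree at least $4$'' is unavailable here, as the paper points out. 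Consequently a face of length $6$--$9$ whose boundary has one or two degree-$2$ vertices and otherwise only degree-$3$ vertices cannot be forbidden, yet under your rule such a face must send charge $2$ per incident degree-$2$ vertex while holding at most $|f|-6\le 3$; the claim ``every face ends with non-negative charge'' is therefore not provable, and no thread-length bound repairs it. (The best one can actually prove is the paper's Lemma~\ref{lem:case1}: between consecutive degree-$2$ vertices on a face there is some vertex of degree at least $4$ --- possibly far away along the boundary.)

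The missing idea is a mechanism for moving charge from vertices of degree at least $4$ to degree-$2$ vertices across arbitrarily long stretches of degree-$3$ vertices, together with reducible configurations that are allowed to be arbitrarily large. The paper does this with \emph{path systems}: directed facial paths with degree-$3$ interiors, charge $1/2$ sent from out-endvertex to in-endvertex along each chosen path ($X_0$), and then, for a vertex $w_0$ left with negative charge, the entire ``outgoing cascade'' $X_0^+(w_0)$ (suitably augmented) is shown reducible by a coloring argument guaranteeing that every monochromatic component lies in the union of at most two facial paths. The constant $14$ then comes from Lemma~\ref{lem:face-length} (faces are chordless cycles of length at most $9$, so facial paths have at most $8$ edges, and two of them glued at endpoints give an induced path of at most $7+7$ edges), not from any thread-length bound as you conjecture. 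Your proposal correctly identifies the minimal-counterexample framework, the minimum-degree reduction, and the role of Euler's formula, but without the path-system discharging and the large reducible configurations the argument does not close.
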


\begin{theorem}\label{thm:girth-4-LB}
 For every positive integer $t$ there is a planar graph $G_t$ of girth $4$ such that any vertex coloring of $G_t$ in two colors results in a monochromatic path of length $t-1$.
 \end{theorem}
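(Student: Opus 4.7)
The plan is to construct $G_t$ recursively, producing a planar graph of girth exactly $4$ that forces monochromatic paths of length $t-1$ in any $2$-coloring. The base cases are easy: $G_1$ is a single vertex (trivially containing a $P_0$) and $G_2$ is any non-bipartite planar graph of girth $4$---for instance, the $5$-cycle $v_1v_2v_3v_4v_5$ together with a new vertex $w$ adjacent to $v_1$ and $v_3$, which contains a $C_4$ and an odd cycle. A non-bipartite graph forces a monochromatic edge in any $2$-coloring.

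For the inductive step I would strengthen the hypothesis to: $G_{t-1}$ has two distinguished port vertices $p,q$ such that in any $2$-coloring there is a monochromatic path of length $t-2$ with an endpoint at $p$ or $q$. Given such $G_{t-1}$, I build $G_t$ from a long odd cycle $C = c_1 c_2 \cdots c_{2m+1} c_1$, used as a non-bipartite planar backbone of girth $\geq 5$, with a fresh copy of $G_{t-1}$ attached to each edge $c_i c_{i+1}$ by identifying the ports $p,q$ with $c_i, c_{i+1}$ (inserting a small $C_4$-shaped gadget at each attachment, if necessary, to prevent introducing triangles). Each attached copy is drawn in its own face of a planar embedding of $C$, preserving planarity and girth $4$.

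The analysis of a $2$-coloring of $G_t$ then goes as follows: restricting the coloring to $C$, which is an odd cycle, gives some monochromatic edge $c_i c_{i+1}$, say red. The strengthened inductive hypothesis applied to the copy of $G_{t-1}$ attached at this edge yields a red path of length $t-2$ with an endpoint at $c_i$ or $c_{i+1}$; concatenating with the red edge $c_i c_{i+1}$ produces a red path of length $t-1$. To carry the induction forward, $G_t$ itself must inherit a port structure suitable for the next step, which can be arranged by designating two vertices on the outer face of the planar embedding as ports and ensuring, by taking enough copies of $G_{t-1}$ along $C$ and invoking pigeonhole, that the forced monochromatic path can be routed to terminate at them.

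The main obstacle is exactly this self-propagation: the forced monochromatic path in $G_t$ must reliably terminate at the newly designated ports rather than somewhere arbitrary inside the graph. This will require both a pigeonhole argument over the many copies of $G_{t-1}$ attached along $C$ (to align at least one copy's path endpoint with a chosen port of $G_t$) and careful gadgetry at every attachment point to guarantee that no triangles or short cycles are inadvertently introduced, so that girth exactly $4$ is preserved throughout the recursion.
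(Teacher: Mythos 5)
Your overall strategy (recursive construction, attach copies of a gadget so that an inductively forced monochromatic path gets extended by one more vertex) is in the right spirit, but the proposal has a genuine gap exactly at the point you flag: the ``port'' self-propagation. Your strengthened hypothesis requires that \emph{every} $2$-coloring of $G_{t-1}$ force a monochromatic path of length $t-2$ ending at one of two \emph{fixed} vertices $p,q$. This already fails in your base case (a non-bipartite planar graph only forces a monochromatic edge \emph{somewhere}, not at two prescribed vertices), and it is not restored by your inductive step: in your $G_t$, the forced path terminates at $c_i$ or $c_{i+1}$ where $c_ic_{i+1}$ is a monochromatic edge of the backbone odd cycle, and the adversary controls \emph{which} edge of the cycle is monochromatic. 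A pigeonhole over many attached copies of $G_{t-1}$ does not help, because all copies see the same coloring and the single monochromatic backbone edge can be placed far from whatever pair of vertices you designate as the new ports. So the induction as set up cannot be closed without a substantially new idea. (There is also a smaller unaddressed point: when you concatenate the path from the attached copy with the edge $c_ic_{i+1}$, you must ensure the path does not already pass through the other port, which your hypothesis does not exclude.)

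The paper's proof avoids the port problem entirely by dropping the requirement that the forced path end at a prescribed place. It keeps only the weak hypothesis that every $2$-coloring of $G_{t-1}$ contains a monochromatic path $Q$ on $t-1$ vertices, and attaches an extension gadget to \emph{every} edge of $G_{t-1}$: for each edge $xy$, two copies of a graph $B_t$ (built from ``fans'' $A_t$, a path on $t$ vertices whose vertices are joined alternately to two special vertices at distance $3$) are glued onto $x$ and $y$. Then, whatever edge $xy$ happens to be the terminal edge of $Q$, the gadget sitting on that edge either contains a monochromatic $P_t$ outright, or supplies one or two new vertices of $Q$'s color adjacent to the end of $Q$ (or to its second vertex), extending it to a monochromatic path on $t$ vertices. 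Because the gadgets are attached along every edge and glued at vertices of distance $3$ inside the gadget, planarity and girth $4$ are preserved with no extra repair gadgetry. If you want to rescue your approach, replacing the two fixed ports by ``a gadget on every edge, used at the terminal edge of the forced path'' is precisely the missing ingredient.
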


Our results are a contribution to the lively and active field of \emph{improper vertex colorings of planar graphs}, where the number of colors is strictly less than $4$ but various restrictions on the monochromatic components are imposed. 
For standard graph theoretic notions used here, we refer to~\cite{Die}.

\paragraph{Organization of the paper.}
In Section~\ref{sec:improper-colorings} we give a short survey of improper colorings of planar graphs, explain the relation to our results in the present paper, and point out some open problems.
In Section~\ref{sec:girth-6} we prove Theorem~\ref{thm:girth-6} and in Section~\ref{sec:girth-4} we prove Theorem~\ref{thm:girth-4-LB}.
We conclude with some open questions in Section~\ref{sec:conclusions}.

\section{Improper Colorings of Planar Graphs}\label{sec:improper-colorings}
 
A proper vertex-coloring of a graph is a coloring in which each monochromatic component is a single vertex, or, equivalently, in which there are no two adjacent vertices of the same color.
In this paper, a $c$-coloring, $c \geq 1$, of a graph is a (not necessarily proper) vertex coloring using $c$ colors.
As every planar graph has a proper $4$-coloring, we focus here on $2$-colorings and $3$-colorings.
The most studied variants of improper colorings are {\it defective}, {\it fragmented} and {\it $P_k$-free colorings}.
A survey on the topic was done in the bachelor thesis of Pascal Weiner~\cite{Weiner}.
 
\paragraph{Defective colorings.}
For a non-negative integer $k$, a vertex coloring is called \emph{$k$-defective} if each monochromatic component has maximum degree at most $k$.
We define $k_d(g,c)$ to be the smallest $k$ such that every planar graph of girth at least $g$ admits a $k$-defective $c$-coloring.
Defective colorings were introduced in 1986 by Cowen, Cowen and Woodall~\cite{CCW86}, who showed that $k_d(3,3) = 2$, i.e., every planar graph admits a $3$-coloring in which every monochromatic component has maximum degree at most $2$.
In fact, there is a $3$-coloring of any planar graph in which every monochromatic component is a path~\cite{Poh90}.
Eaton and Hull~\cite{EH99}, and independently {\v{S}}krekovski~\cite{Ske99}, proved that $k_d(3,2) = \infty$, i.e., there are planar graphs of girth $3$ for which any $2$-coloring results in a monochromatic component of arbitrarily high maximum degree.
Cowen, Goddard and Jerum~\cite{CGJ97} proved that every outerplanar graph admits a $2$-defective $2$-coloring.
Havet and Sereni~\cite{HS06} showed that for $c \geq 2, k \geq 0$ every graph of maximum average degree less than $c + \frac{ck}{c+k}$ admits a $k$-defective $c$-coloring.
By Euler's formula a planar graph of girth $g$ has maximum average degree less than $\frac{2g}{g-2}$.
Hence, the last result implies that $k_d(5,2) \leq 4$ and $k_d(6,2) \leq 2$.
The result of Borodin, Kostochka and Yancey~\cite{BKY13} shows that $k_d(7,2) = 1$.

\paragraph{Fragmented colorings.}
A $c$-coloring is \emph{$k$-fragmented} if each monochromatic component has at most $k$ vertices, and $k_f(g,c)$ denotes the smallest $k$ such that every planar graph of girth at least $g$ admits a $k$-fragmented $c$-coloring.
Fragmented coloring were first introduced in 1997 by Kleinberg~\textit{et~al.} in~\cite{KMRV97}, where they showed that $k_f(3,3) = \infty$, i.e., there is no $k$ such that every planar graph admits a $k$-fragmented $3$-coloring, a result that has been independently proven by Alon~\textit{et al.}~\cite{ADOV03}.
Esperet and Joret~\cite{EJ13} recently proved that $k_f(4,2) = \infty$, although this already follows from the fact that $k_d(4,2) = \infty$~\cite{Ske99}.

\paragraph{$P_k$-free colorings.}
Finally, a $c$-coloring is \emph{$P_k$-free} if there is no monochromatic path on $k$ vertices, and $k_p(g,c)$ denotes the smallest $k$ such that every planar graph of girth at least $g$ admits a $P_k$-free $c$-coloring.
Such $P_k$-free colorings were already introduced in 1968 by Chartrand, Geller and Hedetniemi~\cite{CGH68}, who showed that $k_p(3,3) = \infty$, i.e., there is no $k$ such that every planar graph admits a $P_k$-free $3$-coloring.
In a different paper~\cite{CGH71}, the same authors showed that same holds for outerplanar graphs and $2$ colors.
More than $20$ years later, the former result has been reproved by Akiyama~\textit{et al.}~\cite{AEGW89}, as well as Berman and Paul~\cite{BP89}.


We summarize the results for defective, fragmented and $P_k$-free colorings  using $2$ colors.
\begin{table}[htb]
 \centering
 \renewcommand{\arraystretch}{1.5}
 \newcolumntype{C}{>{\centering\arraybackslash}X}
 \newcolumntype{R}{>{\raggedleft\arraybackslash}Y}
 \begin{tabularx}{\textwidth}{|r|C|C|C|C|C|}
  \hline
   girth $g$ & $3$ & $4$ & $5$ & $6$ & $7$\\
  \hline
   \multirow{2}{*}{$k_d(g,2)$}
   & $\infty$ & $\infty$ & $\geq 2$ Fig.~\ref{fig:girth-5-example} & $\leq 2$ & $1$ \\
   & \cite{EH99} & \cite{Ske99} & $\leq 4$~~\cite{HS06}~~ & \cite{HS06} & \cite{BKY13} \\
  \hline
   \multirow{2}{*}{$k_f(g,2)$}
   & $\infty$ & $\infty$ & $\geq 3$ & $\leq 15$ & $2$ \\
   & \cite{KMRV97,ADOV03} & \cite{EJ13} &  Fig.~\ref{fig:girth-5-example} &  Thm.~\ref{thm:girth-6} & \cite{BKY13} \\
  \hline
   \multirow{2}{*}{$k_p(g,2)$}
   & $\infty$ & $\infty$ & $\geq 4$ & $\leq 16$ & $3$ \\
   & \cite{AEGW89,CGH68,BP89} & Thm.~\ref{thm:girth-4-LB} & Fig.~\ref{fig:girth-5-example} & Thm.~\ref{thm:girth-6} & \cite{BKY13} \\
\hline
 \end{tabularx}
 \caption{Improper $2$-coloring results for planar graphs of girth $g$.}
 \label{tab:overview}
\end{table}

\noindent
Theorem~\ref{thm:girth-6} and Theorem~\ref{thm:girth-4-LB} immediately imply the following, c.f. Table~\ref{tab:overview}.

\begin{corollary}
 We have that $k_f(6,2) \leq 15$, $k_p(6,2) \leq 16$, and $k_p(4,2) = \infty$.
\end{corollary}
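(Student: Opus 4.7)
The plan is to derive all three inequalities directly from Theorem~\ref{thm:girth-6} and Theorem~\ref{thm:girth-4-LB}, combined with the definitions of $k_f(g,c)$ and $k_p(g,c)$ from Section~\ref{sec:improper-colorings}. This is a routine unpacking, so the only real care needed is the bookkeeping between the \emph{length} of a path (number of edges) and its \emph{number of vertices}, which causes the off-by-one between the $k_f$ and $k_p$ bounds.

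For the two upper bounds, I would apply Theorem~\ref{thm:girth-6} to an arbitrary planar graph $G$ of girth at least $6$, using the constant list assignment $L(v)=\{1,2\}$ for all $v\in V(G)$. This already yields a $2$-coloring of $G$ in which every monochromatic component is a path of length at most $14$, and hence has at most $15$ vertices. By definition this is a $15$-fragmented $2$-coloring, giving $k_f(6,2)\leq 15$. Since the same components contain no subpath on more than $15$ vertices, the coloring is $P_{16}$-free, so $k_p(6,2)\leq 16$.

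For the last part, I would show $k_p(4,2)=\infty$ by contradiction-free direct quantification: fix any positive integer $k$; applying Theorem~\ref{thm:girth-4-LB} with $t:=k$ produces a planar graph $G_k$ of girth $4$ such that every $2$-coloring of $V(G_k)$ contains a monochromatic path of length $k-1$, i.e., on $k$ vertices. Thus no $2$-coloring of $G_k$ is $P_k$-free, so $k_p(4,2)>k-1$ for every $k$, forcing $k_p(4,2)=\infty$. The whole proof is a couple of sentences; the only ``obstacle'' is to make sure the length-versus-order translation is done correctly in each of the three claims.
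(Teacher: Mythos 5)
Your proposal is correct and matches the paper, which states the corollary as an immediate consequence of Theorems~\ref{thm:girth-6} and~\ref{thm:girth-4-LB}; your explicit handling of the length-versus-vertex-count translation (length $14$ means $15$ vertices, hence $15$-fragmented and $P_{16}$-free) and the quantification over $t$ for $k_p(4,2)=\infty$ is exactly the intended routine unpacking.
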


Let us also mention that defective and fragmented colorings have also been considered for non-planar graphs of bounded maximum degree~\cite{HST03,ADOV03,BS09}, bounded number of vertices~\cite{LMST08}, and for minor-free graphs~\cite{WW09}.
In natural generalizations one allows different color classes to have different defect (see for example~\cite{BS07,MO15}), or considers list-coloring, which in fact, is the case in many of the results above.


\section{Proof of Theorem \ref{thm:girth-6}}\label{sec:girth-6}

For a list assignment $L$, we call an $L$-coloring of a planar graph {\it good} if each monochromatic component is a path of length at most $14$.
Throughout this section we let, for the sake of contradiction, a graph $G$ be a counterexample to Theorem~\ref{thm:girth-6}, so that $G$ is vertex-minimal, and among all such graphs has the largest number of edges.
I.e., $G$ has no good $L$-coloring, $G$ has a fixed plane embedding such that the addition of any edge to $G$ creates a crossing or a cycle of length at most $5$, and any subgraph of $G$ with fewer vertices has a good $L$-coloring.
To avoid a special treatment of an outer face we assume $G$ to be embedded without crossings on the sphere.
Note also that if a graph has no good $L$-coloring, then any of its supergraphs has no good $L$-coloring.

\paragraph{Idea of the proof.}
Our proof extends the ideas of Havet and Sereni~\cite{HS06}.
We start by proving some structural properties of $G$, i.e., that $G$ has minimum degree $2$, all faces of $G$ are chordless cycles of length at most $9$, and proving  a statement about the distribution of vertices of degree $2$ around every face $F$ in $G$.

If $G$ has a path $P$ of length at most $14$ with endpoints of degree $2$ and all inner vertices of degree $3$, then each vertex in $P$ has exactly one neighbor not in $P$.
Deleting the vertices of $P$ from $G$ gives a graph that has a good coloring.
Color each vertex of $P$ with a color different from the color of its neighbor not in $P$.
This gives a good coloring of $G$ contradicting the fact that $G$ is a minimal counterexample.

We generalize this simple argument, that uses a single path, to path systems, that is, sets of (directed) facial paths in $G$ with all inner vertices of degree $3$.
Next, we consider a charge of $\deg(v)-3$ at every vertex $v$ and define discharging rules shifting a charge of $1/2$ from the out-endpoint of every path in $X_0$ to its in-endpoint,  based on a specific path system $X_0$.
The total charge on all the vertices, before as well as after the discharging, is negative, giving some  vertices ending up with negative charge.
We consider such a vertex $w_0$, build another path system based on what is ''outgoing`` from this vertex, and show that the corresponding subgraph of $G$ is a reducible configuration.
Here, a subgraph $H$ is reducible if any good $L$-coloring of $G - V(H)$ (which exists by the minimality of $G$) could be extended to a good $L$-coloring of the whole graph $G$.
This contradicts the assumption that $G$ is a counterexample and hence concludes the proof.


\paragraph{Structural properties of $G$.}

\begin{lemma}\label{lem:min-degree}
 $G$ is connected and has minimum degree at least $2$.
\end{lemma}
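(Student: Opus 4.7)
The proof is a routine minimality argument, so I will outline both parts briefly.

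\textbf{Connectedness.} Suppose for contradiction that $G$ has at least two connected components $G_1,\ldots,G_k$. Each $G_i$ is a proper subgraph of $G$ with strictly fewer vertices, so by the vertex-minimality of $G$, each $G_i$ admits a good $L$-coloring $c_i$ (with respect to the restriction of $L$ to $V(G_i)$). Define $c\colon V(G)\to\mathbb{Z}$ by $c(v)=c_i(v)$ when $v\in V(G_i)$. Any monochromatic component of $c$ is a monochromatic component of some $c_i$, hence a path of length at most $14$. Thus $c$ is a good $L$-coloring of $G$, contradicting that $G$ is a counterexample.

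\textbf{Minimum degree at least $2$.} Suppose some $v\in V(G)$ has $\deg_G(v)\leq 1$. The graph $G-v$ has fewer vertices than $G$, so by minimality it admits a good $L$-coloring $c$. I will extend $c$ to a good $L$-coloring of $G$ by assigning a color to $v$. If $\deg_G(v)=0$, pick any color in $L(v)$; then $\{v\}$ is a new monochromatic component of length $0$, and every other monochromatic component equals one of the monochromatic components of $c$, so the extension is good. If $\deg_G(v)=1$ with unique neighbor $u$, then, since $|L(v)|=2$, there exists $\alpha\in L(v)\setminus\{c(u)\}$; set $c(v)=\alpha$. Because $c(v)\neq c(u)$, the vertex $v$ forms its own monochromatic component (a path of length $0$), while the monochromatic component of $u$ is exactly what it was in $c$; all other monochromatic components are inherited from $c$. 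Hence the extended coloring is a good $L$-coloring of $G$, again a contradiction.

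\textbf{Main difficulty.} There is essentially no obstacle here: both parts use nothing beyond the minimality of $G$ and the fact that each list has size $2$. The only care needed is to verify that extending the coloring to an isolated or leaf vertex does not create or lengthen any monochromatic component beyond length $14$, which is immediate in both cases since $v$ ends up in a component of size $1$. These arguments will be reused implicitly throughout the more substantive structural lemmas that follow, where reducing to a subgraph and extending a good $L$-coloring is the standard template.
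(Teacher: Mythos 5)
Your proof is correct and follows essentially the same route as the paper: color $G-v$ (or the components separately) by vertex-minimality and extend by giving the low-degree vertex a color different from its unique neighbor. The paper's version is just terser, noting that a disconnected $G$ would contain a smaller counterexample as a component.
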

\begin{proof}
 Indeed, if $G$ has a vertex $v$ of degree $1$, then a good coloring of $G - v$ can be extended to a good coloring of $G$ by choosing the color of $v$ to be different from its neighbor in $G - v$.
 If $G$ is not connected, then one of its connected components is a smaller counterexample, contradicting the definition of $G$.
\end{proof}

\begin{lemma} \label{lem:face-length}
 The boundary of each face of $G$ forms a chordless cycle of length at most $9$.
\end{lemma}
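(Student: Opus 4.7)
The plan is to prove the three aspects of the lemma---that the boundary of each face is (i) a (simple) cycle, (ii) of length at most $9$, and (iii) chordless---in sequence.

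First I would show that every face boundary is a simple cycle. Combined with Lemma~\ref{lem:min-degree}, it suffices to show that $G$ is $2$-connected. If $v$ were a cut vertex with $G = G_1 \cup G_2$ and $G_1 \cap G_2 = \{v\}$, each $G_i$ is a proper subgraph and by minimality has a good $L$-coloring. If the two colorings agree at $v$ we combine them directly; otherwise one applies minimality again to $G_1$ (say) with a carefully chosen size-$2$ list at $v$, to align the desired color. Bridges are handled analogously, with the additional check that the monochromatic component produced across the bridge has length at most~$14$.

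For (ii), suppose for contradiction that some face $F$ has boundary cycle $C = v_1 v_2 \cdots v_k$ with $k \geq 10$. The pairs $(v_i, v_{i+5})$ all have both boundary arcs of length $\geq 5$, and the potential chords $v_i v_{i+5}$ drawn through $F$ create no crossings. I would select two interleaving non-edge pairs, say $(v_1, v_6)$ and $(v_2, v_7)$; note that at most one of the $(v_i, v_{i+5})$ can already be an edge of $G$ since any two such chords would have to cross in the complement of $F$, so two interleaving non-edge pairs always exist. By edge-maximality of $G$, each of $v_1 v_6$ and $v_2 v_7$ drawn through $F$ must create a cycle of length at most $5$, giving paths $P_1$ from $v_1$ to $v_6$ and $P_2$ from $v_2$ to $v_7$, each of length at most $4$ and both lying outside the closure of $F$ (since the boundary arcs of $C$ between the respective endpoints are too long to serve as the witnesses).

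By planarity, $P_1$ and $P_2$ sit in the topological disk complementary to $F$ on the sphere with interleaving endpoints along $\partial F$, and since edges of $G$ do not cross they must share a vertex $w$. A short case analysis using girth~$6$ will show that $w$ is an internal vertex of both paths and does not lie on $\partial F$: any such coincidence would combine a sub-path of some $P_i$ with a short boundary arc to form a cycle of length less than~$6$. Writing $a_i + b_i = |P_i|$ for the lengths of the two sub-paths of $P_i$ at $w$, the sub-paths together with the boundary edges $v_1 v_2$ and $v_6 v_7$ form two simple cycles of lengths $a_1 + a_2 + 1$ and $b_1 + b_2 + 1$. Since both are at least $6$ by girth, we obtain $|P_1| + |P_2| + 2 = (a_1 + a_2) + (b_1 + b_2) + 2 \geq 12$, contradicting $|P_1| + |P_2| \leq 8$. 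Hence $k \leq 9$. Finally, (iii) follows at once: a chord would split $C$ into subpaths of lengths $a, b$ with $a + b = k \leq 9$, producing cycles of lengths $a+1$ and $b+1$ whose shorter member has length at most $\lfloor k/2 \rfloor + 1 \leq 5$, contradicting girth~$6$.

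The hard part will be the planarity-plus-girth case analysis in step (ii): carefully ensuring that the shared vertex $w$ is an internal vertex of both $P_1$ and $P_2$ and not on $\partial F$, and that the two resulting closed walks are genuine simple cycles rather than non-simple closed walks. Step (i) also requires some care in manipulating $L$-colorings across cut vertices and bridges, but the main creative work is the planarity argument.
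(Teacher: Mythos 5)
Your part (iii) and your basic mechanism for (ii) (add a chord $v_iv_{i+5}$ inside $F$, use edge-maximality to get a witness path of length at most $4$, and play two interleaving witnesses against each other via planarity) are in the spirit of the paper, but there are two genuine gaps. First, step (i): the paper does not prove that face boundaries are cycles by a cut-vertex coloring argument, and yours does not work as stated. If $v$ is a cut vertex and you take good $L$-colorings of $G_1$ and $G_2$, then even when the two colorings agree at $v$ the monochromatic component of $v$ in $G$ is the union of the two monochromatic paths through $v$, which may have length up to $28$ or fail to be a path at all (when $v$ is an interior vertex of both); and since lists have size exactly $2$ you cannot force $v$ to receive a prescribed color, so the proposed ``carefully chosen size-$2$ list at $v$'' cannot align the colorings. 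The paper instead handles a non-simple boundary walk through edge-maximality: if a vertex $u$ repeats on the walk, it exhibits two vertices of the walk lying in different components of $G-u$ whose distance in $G$ is exactly $5$, so the corresponding edge can be added inside the face without creating a crossing or a cycle of length at most $5$, a contradiction.

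Second, in step (ii) the claim you defer to ``a short case analysis'' --- that the common vertex $w$ of $P_1$ and $P_2$ is an internal vertex of both and does not lie on $\partial F$ --- is not a consequence of girth $6$, and in fact is false in general. For example, a vertex $u\notin C$ adjacent to $v_2$ and $v_6$ (a $2$-ear of width $4$) creates no cycle of length less than $6$, yet it yields the witness $P_1=v_1v_2uv_6$, which meets $\partial F$ at $v_2$ and may share the edge $v_2u$ with $P_2$; your two closed walks then degenerate and the length count $a_1+a_2+1\geq 6$, $b_1+b_2+1\geq 6$ gives no contradiction. Ruling out chords, $2$-ears and short $3$-ears on $C$ is precisely where the work lies: the paper's Cases 1--3 choose extremal ears (e.g.\ a $2$-ear of smallest width), bound the lengths of the segments of the witness path between its hits on $C$, and argue in both directions along $C$ to finally produce a cycle of length at most $5$. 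So your outline covers only the clean case in which the witnesses are internally disjoint from $C$ and meet each other in a single vertex; the degenerate interactions with $C$, which your sketch treats as routine, constitute most of the paper's proof and require additional ideas, not just care.
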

\begin{proof}
 First assume for the sake of contradiction there is a face $F$ whose closed boundary walk $W = u_0,\ldots,u_m$ is not a cycle.
 Then there is a vertex $u$ appearing at least twice on $W$, say $u=u_0 = u_j$ with $j \neq 0$.
 As $G$ has minimum degree $2$, each of the closed walks $W_1 = u_0,u_1,\ldots,u_j$ and $W_2 = u_j,u_{j+1},\ldots,u_0$ contains at least one cycle, i.e., has at least $6$ vertices.
 Note that the vertices $u_2$ from $W_1$ and $u_{j+3}$ from $W_2$ lie in distinct connected components of $G - u$.
 Moreover, as $G$ has minimum degree $2$, $u_2$ and $u_{j+3}$ are at distance $2$ and $3$ from $u$ along $W$, respectively.
 Hence any $u_2-u_{j+3}$ path goes through $u$ and, as there are no cycles of length at most $5$, the distance between $u_2$ and $u_{j+3}$ in $G$ is $5$.
 Thus we can add an edge $u_2u_{j+3}$ into $F$, creating a planar graph with girth at least $6$.
 A contradiction to edge-maximality of $G$.
 
 Thus, the boundary of each face $F$ forms a cycle, $C=u_0, \ldots, u_m, u_0$. 
 Assume that $C$ has length at least $10$, i.e., that $m\geq 9$. 
 Recall, that an \emph{ear} $E$ of a cycle $C$ is a path that shares only its endpoints with the vertex set of the cycle.
 For $i=0,\ldots,m$ let $G'(i)$ be obtained from $G$ by adding an edge $u_i,u_{i+5}$ into the face $F$, addition of indices modulo $m+1$. If $G'(i)$ has girth at least $6$, this contradicts the edge-maximality of $G$.
 So, there is a cycle on at most $5$ vertices containing edge $u_iu_{i+5}$ in $G'(i)$, denote a  shortest $u_i-u_{i+5}$ path in $G$ by $P(i,i+5)$.
 Its length is at most $4$, less than the distance between $i$ and $i+5$ along $C$ (as $m \geq 9$), so there is an ear of length $\ell$, $\ell \leq 4$, and $\ell$ is less than the distance between its endpoints along $C$. 
 The \emph{width} of an ear is the smallest distance between its endpoints along the cycle.
 If $Q$ is a path or a cycle and $P$ is a path in $Q$ with endpoints $u$ and $v$, we write $P = uQv$.
 We denote the length of $P$ as $||P||$.
 A \emph{$k$-ear} is an ear of length $k$.
 
 \medskip 
 
 {\it Case 1.} $C$ has a chord. \\
 Assume that $u_0u_k$ is a chord, $k\geq 5$. 
 A path $P=P(-3,2)$ must contain $u_0$ or $u_k$. 
 If $P$ contains $u_0$, then $||u_{-3}Pu_0|| \geq 3$ and $||u_0Pu_2|| \geq 2$, as otherwise $P \cup C$ contains a cycle of length at most $5$.
 Similarly, if $P$ contains $u_k$, then $||u_{-3}Pu_k|| \geq 2$ and $||u_kPu_2|| \geq 3$.
 In any case we have that $||P|| \geq 5$, a contradiction. See Figure~\ref{fig:cases1+2} left.
 
 \begin{figure}[htb]
 \centering
 \includegraphics{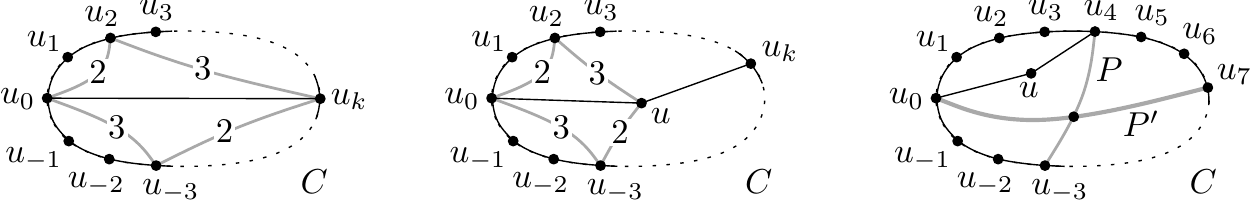}
 \caption{Illustration of Case~1 (left), Case~2 with $w \in \{u_0,u\}$ (middle) and Case~2 with $w=u_k$ (right). The face $F$ bounded by $C$ is shown as the outer face. The numbers indicate the minimum length of a path between the corresponding vertices.}
 \label{fig:cases1+2}
\end{figure}

{\it Case 2.} $C$ has an  ear of length $2$ and no chords. \\
Let $E$ be  a $2$-ear of smallest width, with vertices $u_0,u,u_k$, $4\leq k\leq (m+1)/2$.   A path $P=P(-3, 2)$  contains  $w \in \{u_0, u, u_k\}$.
If $w=u_0$,  see Figure \ref{fig:cases1+2} center,  then (as in Case~1) $||u_{-3}Pu_0|| \geq 3$ and $||u_0Pu_2|| \geq 2$, and if $w=u$, then $ ||u_2Pu||\geq 3$ and $||uPu_{-3}|| \geq 2$, as otherwise there is a cycle of length at most $5$ in $P \cup C$.
In both cases we have $||P|| \geq 5$, a contradiction.
So $w=u_k$,  see Figure \ref{fig:cases1+2} right,  $||wPu_2||\geq 2$, and $||u_{-3}Pw||\geq 2$, otherwise there is a chord.
Thus each of these segments has length $2$.
Since $||u_kPu_2||= 2$, $u_kPu_2$ is a subpath of $C$, otherwise there is a $2$-ear of a smaller width.
So $k=4$.   
Since $||u_{-3}Pu_k||= 2$ and $m \geq 9$, $||u_{-3} C u_k|| \geq 4$, so $||C||\geq 11$.      
Looking at $E$ in the other direction along $C$,  and taking $P' = P(2,7)$, 
we see symmetrically that $u_0P' u_7$ is  an ear of length $2$, that together with $E$ and $P$ creates a cycle of length $4$.

 \begin{figure}[htb]
 \centering
 \includegraphics{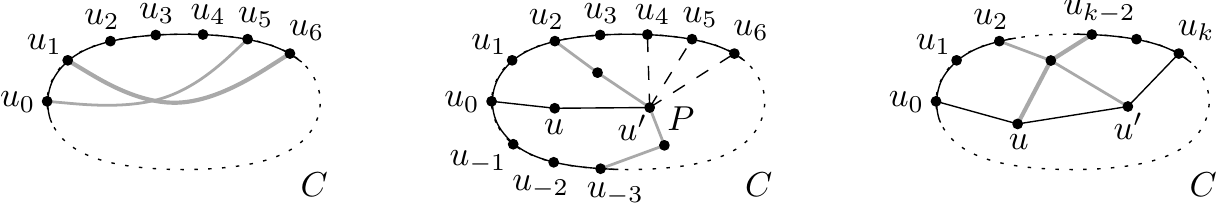}
 \caption{Illustration of Case~3 with only $4$-ears (left) and Case~3 with a $3$-ear (middle and right). The face $F$ bounded by $C$ is shown as the outer face.}
 \label{fig:case3}
\end{figure}

{\it  Case 3.}  $C$ has no  ears of length $2$ or chords.\\
Since each $P(i,i+5)$ results in an ear whose length is smaller than its width,
we see that either there is a  $u_i$-$u_{i+5}$ ear of length $4$ or an ear of length $3$ with width between $4$ and $6$.
If all such ears are of length $4$, then a $u_0-u_5$ ear and $u_1-u_6$ ear intersect and form, together with $C$, a cycle of length at most $5$, a contradiction, see Figure \ref{fig:case3} left.
Assume that $E$ is a  $3$-ear   $u_0, u, u',  u_k$, $4\leq  k \leq 6$,  and all other  $3$-ears  have width either at most  $3$  or at least $k$, see Figure \ref{fig:case3} center and right.
A path  $P=P(2, -3)$ contains a $3$- or a $4$-ear.  Let $w$ be a point on $P$ and $E$. We have that $||u_{-3}P w||\geq 2$ and $||wPu_2||\geq 2$, otherwise
there is either a chord, a $2$-ear, or a cycle of length at most $5$.
It follows that $||u_{-3}Pw|| = 2 = ||wPu_2||$.
Then $w= u'$, and $k\geq 5$.  Looking at $E$ in the other direction, we see symmetrically,
that there is a path of length $2$ between $u_{k-2}$ and $u$, implying the existence of a triangle containing $u$ and $u'$, a contradiction.

\medskip

Thus $C$ has length at most $9$. If $C$ has  a chord, then 
there is a cycle of length at most $5$, a contradiction. So, $C$ is a chordless cycle.
This concludes the proof the Lemma.
\end{proof}


\begin{lemma}\label{lem:case1}
 Let $F$ be any face of $G$ incident to a vertex of degree $2$.
 Then $F$ is incident to a vertex of degree at least $4$, and if $F$ is incident to at least two vertices of degree $2$, then there is a vertex of degree at least $4$ between any two such vertices on both paths along $F$.
\end{lemma}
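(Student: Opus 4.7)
The plan is to prove both parts by contradiction, deleting a carefully chosen subgraph of $F$ and extending a good $L$-coloring of the remainder, following the reducibility template sketched in the introductory paragraph to this section.

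For the first statement, I would assume for contradiction that the boundary cycle $C = v_0 v_1 \cdots v_k$ of $F$, with $v_0$ of degree $2$, contains no vertex of degree at least $4$. By Lemma \ref{lem:face-length}, $C$ is chordless with $k+1 \le 9$. Delete $V(C)$ to obtain $G'$; by the minimality of $G$ there is a good $L$-coloring $c'$ of $G'$. Extend $c'$ to $G$ by the following rule: for every $v_i \in V(C)$ of degree $3$, its unique neighbor $u_i$ outside $C$ is already colored, so set $c(v_i)$ to be the unique color in $L(v_i) \setminus \{c'(u_i)\}$; for every $v_i$ of degree $2$ on $C$ there is no external neighbor and $c(v_i)$ is free. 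With this rule, no monochromatic component of $G$ meeting $C$ extends outside $V(C)$, so such a component is a subgraph of $C$: either the whole cycle $C$ (which is not a path, hence forbidden) or a subpath of $C$ of length at most $|C|-1 \le 8 \le 14$. The free choice at $v_0$ rules out the cycle case: if the forced colors on the degree-$3$ vertices of $C$ are all a single value $c$, set $c(v_0) \in L(v_0) \setminus \{c\}$; otherwise $C$ already uses both colors regardless of $c(v_0)$. This produces a good $L$-coloring of $G$, contradicting the choice of $G$.

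For the second statement, suppose $v_0$ and $v_j$ are two degree-$2$ vertices of $C$ such that the arc $P = v_0 v_1 \cdots v_j$ contains no vertex of degree at least $4$. If $j = k$ (so that $v_0 v_k$ is an edge), I would instead take the other arc, which consists of the single edge $v_0 v_k$ and a fortiori contains no degree-$\ge 4$ vertex; thus we may assume $1 \le j \le k-1$ and hence $V(P) \subsetneq V(C)$. Delete $V(P)$; by minimality $G - V(P)$ has a good $L$-coloring $c'$. Each vertex of $P$ has at most one neighbor outside $P$: the endpoints $v_0$ and $v_j$ have the external neighbors $v_k$ and $v_{j+1}$, every inner vertex of $P$ of degree $3$ has exactly one external neighbor, and every inner vertex of degree $2$ has none. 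Extend $c'$ to $G$ by the same forcing rule as before. Every monochromatic component of $G$ meeting $P$ now lies entirely in $P$, and since $G[V(P)] = P$ is a path, such a component is a subpath of $P$ of length at most $||P|| = j \le k-1 \le 7 \le 14$; no monochromatic cycle can arise. This gives a good $L$-coloring of $G$, the desired contradiction.

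The main technical point is handling the monochromatic-cycle case in Part 1, which does not appear in Part 2 because $G[V(P)]$ is a path. It is precisely the free color at the degree-$2$ vertex $v_0$ that allows us to break an otherwise monochromatic $C$, which is why the hypothesis that $F$ is incident to a degree-$2$ vertex is essential. Once Lemma \ref{lem:face-length} caps $|C|$ at $9$, the length bound of $14$ on monochromatic paths is comfortably satisfied, and once the correct deletions are set up the rest is routine bookkeeping.
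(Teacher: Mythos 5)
Your proposal is correct and follows essentially the same route as the paper's own proof: delete the face boundary (or the offending arc between two degree-$2$ vertices), extend a good $L$-coloring of the rest by giving each deleted vertex a list color different from that of its unique neighbor outside, and use the free choice at a degree-$2$ vertex to prevent the boundary cycle from becoming monochromatic. The only (harmless) wrinkle is the degenerate sub-case of your Part 1 in which $C$ contains no degree-$3$ vertex at all, where your tie-breaking rule is vacuous; but then, since $G$ is connected, $G=C$ would be a cycle of length at most $9$, which clearly admits a good $L$-coloring (or one simply colors $v_0$ last, differently from one of its neighbors on $C$).
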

\begin{proof}
 Let $C$ be the simple chordless cycle bounding $F$.
 First, assume for the sake of contradiction that $C$ contains exactly one vertex $v$ of degree $2$ and all other vertices of degree $3$.
 Consider a good $L$-coloring of $G'= G - V(C)$ and give each vertex $u$ of $C$ of degree $3$ a color in $L(u)$ different from the color of its neighbor in $G'$.
 Give $v$ a color in $L(v)$ such that $C$ does not form a monochromatic cycle.
 As a result, the set of monochromatic components of $G$ is formed by the monochromatic components of $G'$, and paths on at most $8$ vertices formed by vertices of $C$.
   
 Second, assume that $C$ contains two vertices $u,v$ of degree $2$ and a $u-v$ path $P$ in $C$ has no inner vertices or only inner vertices of degree $3$.
 Consider a good $L$-coloring of $G'= G - V(P)$ and give the vertices of $P$ colors from their lists, different from the colors of their unique neighbors in $G'$.
 This does not extend any connected monochromatic component of $G'$ and every new monochromatic component is contained in $P$, i.e., a path on at most $8$ vertices.
 
 I.e., in both cases we have found a good $L$-coloring of $G$, a contradiction to $G$ being a counterexample.
\end{proof}

\paragraph{Path systems.}

A \emph{path system} is a set $X$ of (not necessarily edge-disjoint) directed facial paths in $G$ with all inner vertices being of degree $3$, such that no vertex is an endpoint of one path in $X$ and an inner vertex of another path in $X$.
For a path $P \in X$ directed from vertex $u$ to vertex $v$, we call $u$ the \emph{out-endvertex} and $v$ the \emph{in-endvertex} of $P$.
For a path system $X$, the vertices that are the in-endvertices or out-endvertices of some path in $X$ are called the \emph{endvertices of $X$}, while the \emph{inner vertices of $X$} are the inner vertices of some path in $X$.
For any vertex $v$ in $G$ let $\outdeg_X(v)$ and $\indeg_X(v)$ denote the number of paths in $X$ with out-endvertex $v$ and in-endvertex $v$, respectively.
Note that for an inner vertex $v$ of $X$ we have $\outdeg(v) = \indeg(v) = 0$.
A directed path $P$ is \emph{occupied} by a path system $X$ if the first or last edge of $P$ (incident to its out-endvertex or in-endvertex) is contained in some path in $X$.
So, if $P \in X$, then $P$ is occupied by $X$.
Let us emphasize that throughout the paper $\deg(v)$ and $N(v)$ always refer to the degree and neighborhood of vertex $v$ in $G$, even when we consider other subgraphs of $G$ later.

For a path system $X$ and any two vertices $u,v$ in $G$ we say that \emph{$u$ reaches $v$ in $X$}, denoted by $u \to_X v$, if there is a sequence $u = v_1,\ldots,v_k = v$ of vertices and a sequence $P_1,\ldots, P_{k-1}$ of paths in $X$ such that $v_i$ and $v_{i+1}$ are out-endvertex and in-endvertex of $P_i$, respectively, $i=1,\ldots,k-1$.
Then $X$ is \emph{acyclic} if there are no two distinct vertices $u,v$ with $u \to_X v$ and $v \to_X u$.
For a vertex $w$ of $G$, we define $X^+(w) \subseteq X$ to be the path system consisting of all paths in $X$ whose out-endvertex is $w$ or reachable from $w$ in $X$.

A path system $X$ is \emph{nice} if each of the following properties~\ref{enum:bidirected}--\ref{enum:degree-5} holds.
A path system $X$ with a distinguished vertex $r$, called root, is \emph{almost nice} if the properties~\ref{enum:bidirected}--\ref{enum:degree-5} hold for all vertices different from $r$.

See Figure~\ref{fig:nice-properties} for an illustration.

\begin{enumerate}[label = \textbf{(D\arabic*)}, itemsep=0pt]
 \item Every edge that belongs to two paths in $X$ joins two vertices of degree $3$ each.\label{enum:bidirected}
 \item Every vertex of degree $2$ has outdegree $0$ in $X$.\label{enum:degree-2}
 \item Every vertex of degree $3$ has indegree $0$ and outdegree $0$ in $X$.\label{enum:degree-3}
 \item Every vertex of degree $4$ has positive indegree in $X$ only if it has outdegree $3$ in $X$.\label{enum:degree-4}
 \item Every vertex of degree at least $5$ has in-degree $0$ in $X$.\label{enum:degree-5}
\end{enumerate}

\begin{figure}[htb]
 \centering
 \includegraphics{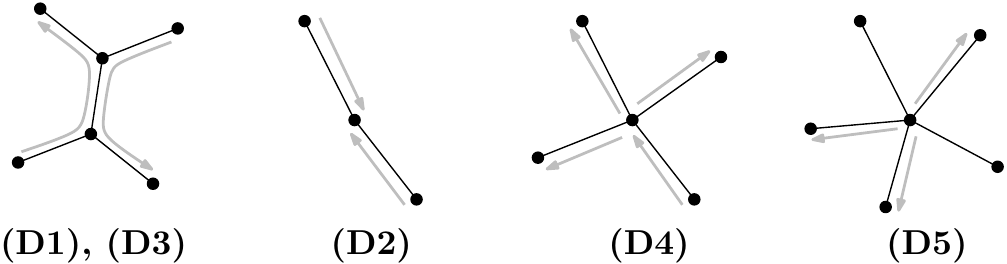}
 \caption{Illustration of properties~\ref{enum:bidirected}--\ref{enum:degree-5}.}
 \label{fig:nice-properties}
\end{figure}

%
The following statements follow immediately from the definitions above.

\begin{lemma}\label{lem:easy-properties}
 For every path system $X$ each of the following holds.
 \begin{enumerate}[itemsep = 0pt, label = (\arabic*)]
  \item If no path $P \in X$ is occupied by $X - \{P\}$, then $X$ satisfies~\ref{enum:bidirected}.\label{enum:easy-1}
  \item If $X' \subseteq X$ and $X$ satisfies any of~\ref{enum:bidirected}--\ref{enum:degree-3},~\ref{enum:degree-5}, then so does $X'$.\label{enum:easy-2}
  \item If $X' \subseteq X$ and $X$ is acyclic, then so is $X'$.\label{enum:easy-3}
  \item If $X$ is nice and $w$ is a vertex, then $X^+(w)$  with root $w$  is almost nice.\label{enum:easy-4}
 \end{enumerate}
\end{lemma}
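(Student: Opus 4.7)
The plan is to dispatch (1)--(3) directly from the definitions, and reserve the real thinking for (4).

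For (1), suppose an edge $e=xy$ belongs to two distinct paths $P,P'\in X$. By hypothesis $P$ is not occupied by $X\setminus\{P\}$, so $e$ cannot be the first or last edge of $P$; the same argument applies to $P'$. Hence $x$ and $y$ are inner vertices of both $P$ and $P'$, and by the very definition of a path system every inner vertex has degree $3$ in $G$. This is exactly~\ref{enum:bidirected}.

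For (2), each of \ref{enum:bidirected}, \ref{enum:degree-2}, \ref{enum:degree-3}, \ref{enum:degree-5} is monotone under passing to a subsystem: deleting paths from $X$ only decreases the number of edges shared by two paths and only decreases $\indeg_X(v)$ and $\outdeg_X(v)$ at every vertex $v$, so the bounds in these four properties survive. Note this also explains why \ref{enum:degree-4} is (correctly) left out: removing a path can destroy the conclusion $\outdeg_X(v)=3$ without destroying the hypothesis $\indeg_X(v)>0$. For (3), any witnessing sequence $u=v_1,\ldots,v_k=v$ together with paths $P_1,\ldots,P_{k-1}\in X'$ for $u\to_{X'} v$ is, by $X'\subseteq X$, also a witness in $X$; so any cycle in $X'$ would already be a cycle in $X$.

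For (4), I would first invoke (2) to inherit \ref{enum:bidirected}, \ref{enum:degree-2}, \ref{enum:degree-3}, \ref{enum:degree-5} from $X$ to $X^+(w)$ globally. The only remaining task is to verify \ref{enum:degree-4} at every vertex $v\neq w$ of degree $4$ with $\indeg_{X^+(w)}(v)>0$, and this is where I expect the only genuine thought to be needed. The key point is that reachability from $w$ propagates downstream through $X^+(w)$: pick any $P\in X^+(w)$ with in-endvertex $v$; its out-endvertex $u$ equals $w$ or is reachable from $w$ in $X$, and $P$ itself gives $u\to_X v$, so $v$ is reachable from $w$ in $X$. Since $\indeg_X(v)\ge\indeg_{X^+(w)}(v)>0$, niceness of $X$ forces $\outdeg_X(v)=3$. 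Each of the three paths $Q\in X$ with out-endvertex $v$ therefore has its out-endvertex reachable from $w$ in $X$, so $Q\in X^+(w)$, giving $\outdeg_{X^+(w)}(v)=3$ as required.
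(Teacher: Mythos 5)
Your proof is correct and matches the paper's intent: the paper states Lemma~\ref{lem:easy-properties} follows immediately from the definitions and gives no written proof, and your argument is exactly the routine verification, including the one point that needs care, namely that for \ref{enum:degree-4} in $X^+(w)$ the downstream closure of reachability puts all three outgoing paths of $X$ at such a vertex $v$ into $X^+(w)$.
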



\paragraph{Discharging with respect to a path system $X$.}

Given a path system $X$, consider the following discharging:
Put charge $ch(v) = \deg(v) -3$ on each vertex of $G$. 
Note that $ch(v) =-1$ for a vertex of degree $2$, and $ch(v)\geq 0$ for all other vertices. 
As all facial cycles have length at least $6$, we have $6f \geq 2e$, where $f$ denotes the number of faces of $G$.
Together with Euler's formula $n - e + f = 2$ this implies $n - e + e/3 \geq 2$.
Thus the total charge is $\sum_{v\in V(G)}  (\deg(v) - 3) =  2e - 3n \leq -6$.
  
Define $ch'(v) = ch(v) + \frac{1}{2}(\indeg_X(v) - \outdeg_X(v))$.
Intuitively, for every path in $X$ a $1/2$-charge is sent from out-endvertex to in-endvertex.
Thus, the total sum of charges in $ch'$ is the same as in $ch$, i.e., $\sum_v ch(v) = \sum_v ch'(v)$.

\paragraph{Defining a path system $\P$.}


As $G$ has girth at least $6$, there is a vertex $v$ of degree $2$ in $G$ and by Lemma~\ref{lem:case1} both faces incident to $v$ contain a vertex of degree at least $4$.
So there are faces with at least two vertices of degree different from $3$.
For each such face $F$ the boundary of $F$ can be uniquely partitioned into edge-disjoint counterclockwise oriented paths with all inner vertices of degree $3$ and endpoints of degree different from $3$.
We denote by $\P$ the path system consisting of all such paths with in-endvertex of degree $2$ or $4$ and out-endvertex of degree at least $4$, for all faces $F$ with at least two vertices of degree different from $3$.
So for each path in $\P$ the degrees $d_1,d_2$ of its in-endvertex and out-endvertex, respectively, satisfy $(d_1,d_2) \in \{(2,4),(2,\ell),(4,4),(4,\ell) \mid \ell \geq 5\}$.

By Lemma~\ref{lem:face-length} every face of $G$ is bounded by a simple chordless cycle of length at most $9$.
Thus every $P \in \P$ is a path on at most $8$ edges.
As any two paths in $\P$ in the boundary of the same face $F$ are edge-disjoint, every edge of $G$ lies in at most two paths in $\P$, at most one for each face incident to the edge.
If an edge lies in two paths in $\P$, these paths have the edge oriented in opposite directions.
For a vertex $v$ in $G$ with $\deg(v) = 3$ we have $\outdeg_{\P}(v) = \indeg_{\P}(v) = 0$ by definition. 
Note that by Lemma~\ref{lem:case1} for every vertex $v$ with $\deg(v) = 2$ we have $\outdeg_{\P}(v) = 0$ and $\indeg_{\P}(v) = 2$. 
For a vertex $v$ with $\deg(v) \geq 5$ we have $\indeg_{\P}(v)=0$, i.e., $\P$ has properties~\ref{enum:degree-2},~\ref{enum:degree-3} and~\ref{enum:degree-5}.
We provide an example illustrating these concepts in Figure~\ref{fig:big-example}.

\begin{figure}[htb]
 \centering
 \includegraphics{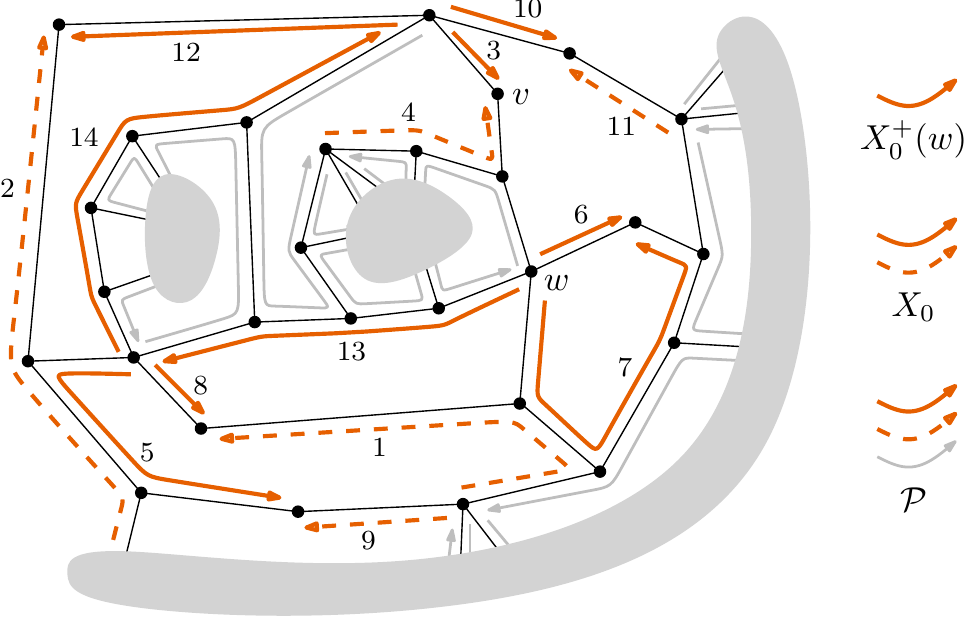}
 \caption{A (part of a) planar graph of girth $6$ and the path systems in $\P$, $X_0$ and $X_0^+(w)$. The labels show the order in which paths of $X_0$ were selected. Paths $13$ and $14$ were selected in step~\ref{enum:step-2}.}
 \label{fig:big-example}
\end{figure}


\paragraph{Defining a path system $X_0 \subseteq \P$.}

We define $X_0 \subseteq \P$ selecting paths one by one, using the following procedure, where we go through the vertices in question in an arbitrary but fixed order.
At all times, let $X_0$ denote the set of already chosen paths, initially $X_0 = \emptyset$.

\begin{enumerate}[label = \arabic*.)]
 \item For every vertex $v$ with $\deg(v) = 2$ we put a path from $\P$ into $X_0$ if its in-endpoint is $v$ and if it is not occupied by $X_0$.\label{enum:step-1}
\end{enumerate}
After step~\ref{enum:step-1} is done for all vertices of degree~$2$, we proceed as follows.
\begin{enumerate}[label = \arabic*.), start = 2]
 \item For every vertex $v$ with $\deg(v) = 4$ and $\outdeg_{X_0}(v) = 3$, put a path from $\P$ into $X_0$ if its in-endpoint is $v$ and if it is not occupied by $X_0$.\label{enum:step-2}
\end{enumerate}

Later, we shall show that the final path system $X_0$ is nice and acyclic.
For now, we only need to observe that~\ref{enum:degree-2} is satisfied and $\indeg_{X_0}(u)= 2$ for every vertex $u$ of degree $2$. 
In fact,~\ref{enum:degree-2} holds for $\P$ and thus by Lemma~\ref{lem:easy-properties}~\ref{enum:easy-2} it also holds for $X_0\subseteq \P$.
Assume now that $\indeg_{X_0}(u) <2$.
I.e., $P$, one of the two paths in $\P$ with in-endvertex $v$ was occupied by during step~\ref{enum:step-1}.
As all in-endvertices of paths chosen in step~\ref{enum:step-1} are of degree $2$, and the out-endvertex of $P$ has degree at least $4$, this is impossible.


\paragraph{Defining the vertex $w_0$ based on discharging with respect to $X_0$.}

Let us apply discharging to $X_0$.
For every vertex $u$ with $\deg(u) = k$, we have $\indeg_{X_0}(u) \geq 0$ and $\outdeg_{X_0}(u) \leq k$, i.e., $u$ looses a charge of at most $\frac{k}{2}$.
Thus if $\deg(u) = k \geq 6$, the remaining charge $ch'(u)$ is at least $k -3 - \frac{k}{2} \geq 0$.
If $\deg(u) = 3$, then $\outdeg_{X_0}(u) = \indeg_{X_0}(u) = 0$ and hence $ch(u) = ch'(u) = 0$.
If $\deg(u) = 2$,  then $\indeg_{X_0}(u) = 2$ and $\outdeg_{X_0}(u) = 0$ and hence $ch'(u) = \deg(u) - 3 + \frac{1}{2}(2 - 0) = 0$.

On the other hand we have $\sum_v ch(v) = \sum_v ch'(v)$.
As $\sum_v ch'(v) \leq -6$ there is a vertex $w_0$ in $G$ with $ch'(w_0) < 0$.
With the above considerations we conclude that $\deg(w_0) \in \{4,5\}$.

If $\deg(w_0)=5$, then $0> ch'(w_0) \geq (5-3) - \frac{1}{2} \outdeg_{X_0}(w_0)$, so $\outdeg_{X_0}(w_0) \geq 5$.
Since $\outdeg_{X_0}(w_0) \leq \deg(w_0)$, we have that $\outdeg_{X_0}(w_0) =5$. 
If $\deg(w_0)=4$, then $0> ch'(w_0) = (4-3) + \frac{1}{2}( \indeg_{X_0}(w_0)-\outdeg_{X_0}(w_0))$, so either $\outdeg_{X_0}(w_0) =4$ or ($\outdeg_{X_0}(w_0) =3 $ and $\indeg_{X_0}(w_0) = 0$).
In particular, exactly one of the following must hold for the vertex $w_0$ with $ch'(w_0) < 0$:

\begin{enumerate}[label = \textbf{Case \arabic*:}, leftmargin=4em, itemsep=0pt]
 \item $\deg(w_0) \in \{4,5\}$ and $\outdeg_{X_0}(w_0) = \deg(w_0)$.\label{enum:case1}
 \item $\deg(w_0) = 4$, $\outdeg_{X_0}(w_0) = 3$ and $\indeg_{X_0}(w_0) = 0$.\label{enum:case2}
\end{enumerate}

For example, in Figure~\ref{fig:big-example} we see that \textbf{Case~2} applies to vertex $w$.

%
%


\paragraph{Defining rooted path systems $X_1,X_2,X_3,X_4$ based on $w_0$ and $X_0$.}

Depending on the structure of $w_0$ and $X_0$ we shall define one of four path systems $X_1,X_2,X_3,X_4$, each $X_i$ with a specified vertex $w_i$, called the \emph{root}, $i=1,2,3,4$.
Path systems $X_1,X_3$ will be chosen as subsystems of $X_0$, $X_2$ as a subsystem of $X_0$ together with an additional path from $\P$, and $X_4$ as a subsystem of $X_0$ together with a subpath of a path from $\P$.
Note that each of $X_1,X_2,X_3,X_4$ consists of paths of length at most $8$.

\medskip

\textbf{Case 1:} $\deg(w_0) \in \{4,5\}$ and $\outdeg_{X_0}(w_0) = \deg(w_0)$.\\
In this case we define $X_1 = X_0^+(w_0)$ with root $w_0$.

\medskip

\textbf{Case 2:} $\deg(w_0) = 4$, $\outdeg_{X_0}(w_0) = 3$ and $\indeg_{X_0}(w_0) = 0$.\\
Consider the unique edge $e$ at $w_0$ not contained in any path in $X_0$.
As $w_0$ has outdegree $3$, the clockwise next edge $e'$ at $w_0$ after $e$ is contained in some path in $X_0$ with out-endvertex $w_0$.
The in-endvertex of this path is in the face $F$ incident to $w_0$, $e$, and $e'$.
See the middle part of Figure~\ref{fig:our-configurations} for an illustration.
So $F$ has at least two vertices of degree different from $3$.
Thus its boundary contains a counterclockwise path $P$ with in-endvertex $w_0$, using the edge $e$, all inner vertices of degree $3$ or no inner vertices at all and out-endvertex $v$ with $\deg(v) \neq 3$.
Let $e''$ be the edge of $P$ incident to $v$.
If $\deg(v)=2$, then from the definition of $X_0$, $\indeg_{X_0}(v) = 2$.
Thus $e''$ belongs to a path in $X_0$ with in-endpoint $v$.
If $\deg(v)\geq 4$ then $P \in \P$, and in step~\ref{enum:step-2} of the construction of $X_0$ the path $P$ must have been rejected because it was occupied, i.e., $e''$ is contained in another path in $X_0$.
As $P$ has $v$ as out-endvertex, the other path has $v$ as in-endvertex and hence $\deg(v) = 4$.
So, $\deg(v)\in \{2,4\}$ and $e''$ lies in some path in $X_0$ with in-endvertex $v$.
In particular it follows that $e \neq e''$, i.e., $P$ has at least one inner vertex.

%


\begin{figure}[htb]
 \centering
 \includegraphics{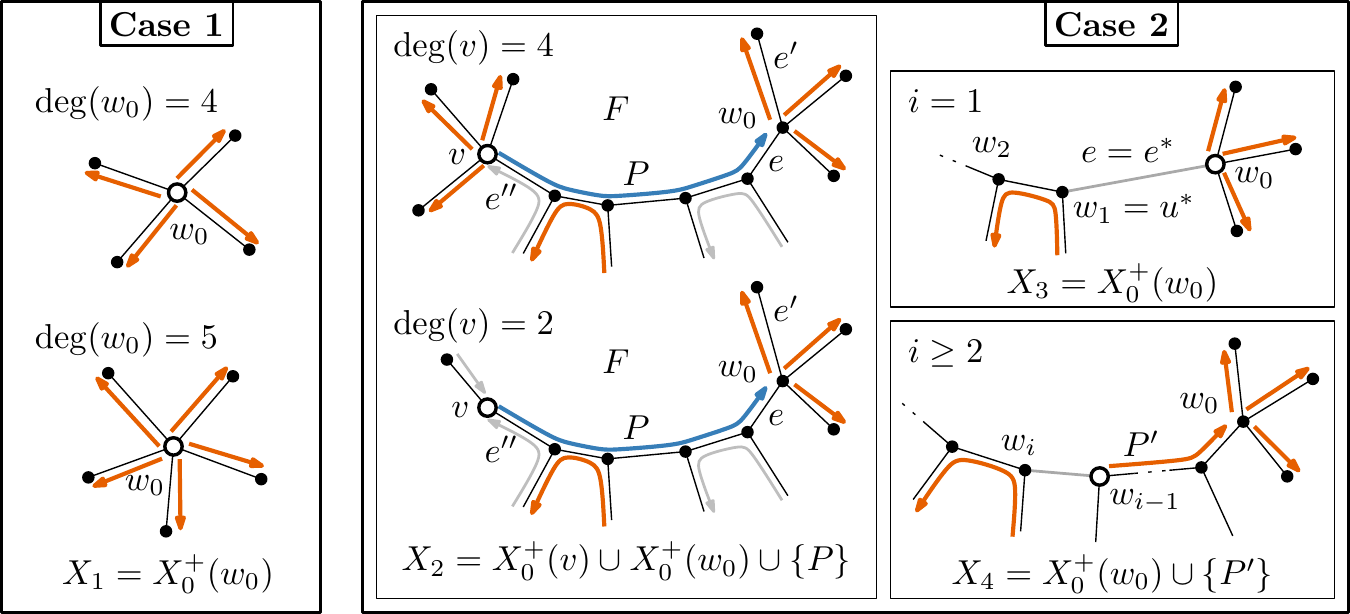}
 \caption{Illustrations of the rooted path systems $X_1,X_2,X_3,X_4$ with highlighted roots.}
 \label{fig:our-configurations}
\end{figure}


Next, we distinguish the cases when $v$ is reachable from $w_0$ in $X_0$ or not, corresponding to the right and middle part of Figure~\ref{fig:our-configurations}, respectively.
In case $v$ is not reachable from $w_0$ in $X_0$, we define $X_2 = X_0^+(v) \cup X_0^+(w_0) \cup \{P\}$ with root $v$.
 
When $v$ is reachable from $w_0$ in $X_0$, let $w_0,w_1,\ldots,w_{k-1},w_k=v$, $k \geq 2$, denote the vertices of $P$ in their order along $P$ from its in-endvertex $w_0$ to its out-endvertex $v$.
Recall that $P$ has at least one inner vertex.
Let $i$ be the smallest index such that $w_i \neq w_0$ and $w_i$ is contained in a path in $X_0^+(w_0)$.
See the right part of Figure~\ref{fig:our-configurations}.
As $v = w_k$ is reachable from $w_0$ in $X_0$, this index is well-defined.
If $i=1$, we define $X_3 = X_0^+(w_0)$ with root $w_0$.
Otherwise we denote the directed $w_{i-1}$-to-$w_0$ subpath of $P$ by $P'$ and define $X_4 = X_0^+(w_0) \cup \{P'\}$ with root $w_{i-1}$.
This is for example the case for vertex $w_0 = w$ in Figure~\ref{fig:big-example}.

\begin{lemma}\label{lem:our-configurations}
~
 \begin{enumerate}[label = (\roman*), itemsep = 0pt]
  \item Each of $X_0,X_1,X_2,X_3,X_4$ is acyclic.\label{enum:acyclic}
  \item $X_0$ and $X_1$ are nice.\label{enum:X_1-nice}
  \item If the root $v$ of $X_2$ has degree $4$, then $X_2$ is nice.\label{enum:X_2-nice}
  \item If the root $v$ of $X_2$ has degree $2$, then $X_2$ is almost nice with $\outdeg_{X_2}(v)=1$.\label{enum:X_2-almost-nice}
  \item $X_3$ is almost nice with $\outdeg_{X_3}(w_0) = 3$ and $\indeg_{X_3}(w_0) = 0$.\label{enum:X_3-almost-nice}
  \item $X_4$ is almost nice with $\outdeg_{X_4}(r)=1$ and $\indeg_{X_4}(r)=0$ for the root $r$ of $X_4$.\label{enum:X_4-almost-nice}
  \item If $j \in \{1, 2, 3, 4\}$ then each endvertex of $X_j$, different from the root, has degree $2$ or $4$ in $G$, the root has degree $2, 3, 4$, or $5$, and each inner vertex of $X_j$ has degree $3$.
  Moreover, each vertex of $X_j$ has at most one neighbor that is not in $X_j$.\label{enum:overall-degrees}
 \end{enumerate}
\end{lemma}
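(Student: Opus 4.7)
The plan is to establish all properties for $X_0$ first, and then transfer them to $X_1, X_2, X_3, X_4$ via Lemma~\ref{lem:easy-properties} together with a careful analysis of the one added path ($P$ or $P'$) in the relevant cases.

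For acyclicity of $X_0$, I argue by contradiction: any reachability cycle in $X_0$ must consist entirely of degree-$4$ vertices processed in step~\ref{enum:step-2} (since $\P$ and hence $X_0$ satisfies~\ref{enum:degree-2},~\ref{enum:degree-3},~\ref{enum:degree-5}, so any vertex that is simultaneously an in- and out-endvertex has degree $4$). The outgoing cycle-edge from $u_i$ to $u_{i+1}$ is added while processing $u_{i+1}$, yet must be present when $u_i$ is processed (to contribute to $\outdeg_{X_0}(u_i) = 3$ at that moment); the crucial subclaim is that no outgoing path at a degree-$4$ vertex $u$ is ever added \emph{after} $u$'s step-\ref{enum:step-2} processing, because then all four edges at $u$ are occupied and any later outgoing path is rejected. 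Thus $u_{i+1}$ precedes $u_i$ in the processing order, producing a cyclic contradiction. Acyclicity of $X_1, X_3 \subseteq X_0$ follows from Lemma~\ref{lem:easy-properties}\ref{enum:easy-3}; for $X_2$, the only new reachability $v \to w_0$ from $P$ cannot close since $w_0 \not\to_{X_0} v$ by the case hypothesis; and for $X_4$, $w_0 \not\to_{X_0} w_{i-1}$ by the minimality of $i$.

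For niceness of $X_0$,~\ref{enum:degree-2},~\ref{enum:degree-3},~\ref{enum:degree-5} are inherited from $\P$ by Lemma~\ref{lem:easy-properties}\ref{enum:easy-2}; property~\ref{enum:bidirected} follows from Lemma~\ref{lem:easy-properties}\ref{enum:easy-1} because whichever of two paths sharing an endpoint edge is added later would have been occupied; and~\ref{enum:degree-4} follows from the step-\ref{enum:step-2} condition combined with the subclaim above. Niceness of $X_1 = X_0^+(w_0)$ transfers via Lemma~\ref{lem:easy-properties}\ref{enum:easy-2}, with~\ref{enum:degree-4} verified by observing that a degree-$4$ vertex with positive indeg in $X_1$ is reachable from $w_0$ and so has all outgoing paths of $X_0$ present in $X_1$; at the root, $\indeg_{X_0}(w_0) = 0$ follows from~\ref{enum:degree-4},~\ref{enum:degree-5}, and the Case~1 hypothesis. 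For parts (iii)--(vi), the subsystems $X_0^+(v) \cup X_0^+(w_0)$ and $X_0^+(w_0)$ inherit~\ref{enum:bidirected}--\ref{enum:degree-5} similarly, and it remains to verify~\ref{enum:bidirected} at the new edges of $P$ or $P'$. For $P$: the edge $e$ at $w_0$ is free, while the edge $e''$ at $v$ lies in a unique $R \in X_0$ with in-endvertex $v$; but $R \notin X_0^+(v)$ (by acyclicity) and $R \notin X_0^+(w_0)$ (else $v$ would be reachable from $w_0$, contradicting the case hypothesis for $X_2$), so $R \notin X_2$, and the internal edges of $P$ join degree-$3$ vertices. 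Hence $\indeg_{X_2}(v) = 0$: when $\deg(v) = 4$ this makes~\ref{enum:degree-4} vacuous at $v$, giving (iii); when $\deg(v) = 2$ only~\ref{enum:degree-2} fails at $v$ with $\outdeg_{X_2}(v) = 1$, giving (iv). Part (v) is the $X_1$-analysis applied under Case~2, yielding $\outdeg_{X_3}(w_0) = 3, \indeg_{X_3}(w_0) = 0$ by construction. Part (vi) is analogous: $P'$ passes the~\ref{enum:bidirected}-check using $w_{i-1} \notin X_0^+(w_0)$, and contributes $\outdeg_{X_4}(w_{i-1}) = 1$ at a degree-$3$ vertex, violating only~\ref{enum:degree-3} at $w_{i-1}$.

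Part (vii) collects the degree bookkeeping. A non-root endvertex cannot be of degree $\geq 5$ since by~\ref{enum:degree-5} such a vertex has indeg $0$ in $X_0$ and cannot be reached, while in-endvertices of $\P$-paths are of degree $2$ or $4$ and the added paths $P, P'$ contribute endvertices only of degree $2, 3$ or $4$; inner vertices have degree $3$ from the path-system definition. For the claim that each vertex of $X_j$ has at most one neighbor outside $X_j$: by~\ref{enum:bidirected}, every edge incident to a non-degree-$3$ vertex lies in at most one path of $X_j$, so $\outdeg_{X_j}(v) + \indeg_{X_j}(v)$ counts distinct used edges at $v$; combined with~\ref{enum:degree-4} and the root conditions established above, this sum is at least $\deg(v) - 1$, so at most one incident edge is unused by paths of $X_j$; inner vertices of degree $3$ have two of three edges on their containing path, leaving at most one outside. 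The main obstacle throughout is the coordinated edge- and endvertex-bookkeeping, most delicately the exclusion of the conflicting path $R$ from $X_2$ in the~\ref{enum:bidirected}-check, which rests jointly on acyclicity of $X_0$ and on the case hypothesis that $v$ is not reachable from $w_0$.
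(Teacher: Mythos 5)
Your parts (i)--(vi) follow the paper's argument essentially step for step: acyclicity of $X_0$ via the processing order in steps 1 and 2 (your subclaim that after a degree-$4$ vertex receives an incoming path all four of its edges are occupied is exactly the reason the paper gives for ``$P_1$ was already put into $X_0$''), inheritance of (D2), (D3), (D5) from $\P$, (D1) and (D4) from the construction, almost-niceness of $X_0^+(\cdot)$ via Lemma~\ref{lem:easy-properties}, and the root analysis in each case. Your explicit (D1)-check for the added path $P$ (the incoming path $R$ at $v$ lies neither in $X_0^+(v)$, by acyclicity, nor in $X_0^+(w_0)$, by non-reachability) is in fact more detailed than the paper, which leaves this implicit; you also correctly state (iii)/(iv), where the paper's text contains an evident swap. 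Only a word is missing that after adding $P$ (resp.\ $P'$) property (D4) still holds at $w_0$, i.e.\ $\outdeg(w_0)=3$ there; this is immediate from Case~2 and harmless.

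The genuine gap is in part (vii), in the ``at most one neighbour outside $X_j$'' claim. Your inequality $\indeg_{X_j}(v)+\outdeg_{X_j}(v)\ge \deg(v)-1$ fails at the root $r=w_{i-1}$ of $X_4$: there $\deg(r)=3$ while the only path of $X_4$ through $r$ is $P'$, so the sum is $1$, and indeed two of the three edges at $r$ (the edge $r w_i$ of $P$ and the edge off the face) lie in no path of $X_4$, because by the minimality of $i$ no path of $X_0^+(w_0)$ contains $w_{i-1}$. Hence ``at most one incident edge is unused'' is false at this vertex, and the conclusion must be rescued by a different observation: the neighbour $w_i$ nevertheless \emph{is} a vertex of $X_4$, since by the very choice of $i$ it lies on a path of $X_0^+(w_0)$, even though the edge $w_{i-1}w_i$ belongs to no path of the system. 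This edge-versus-neighbour distinction is not cosmetic; it is exactly why the proof of Lemma~\ref{lem:reducible-general} must allow $E(H')-E(H)$ to contain two edges at the root when $j=4$. A secondary unstated step in the same count: for a degree-$4$ non-root endvertex the bound $\indeg+\outdeg\ge 3$ requires knowing that every non-root out-endvertex of $X_j$ has positive indegree (it is reachable from the root, or it is $w_0$ receiving $P$ resp.\ $P'$), so that (D4) applies; outdegree alone could a priori be $1$ or $2$.
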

\begin{proof}{\ \\}
 \ref{enum:acyclic}:
 First, we shall show that $X_0$ is acyclic.
 Assume for the sake of contradiction that $v_0,\ldots,v_{k-1}$ and $P_0,\ldots,P_{k-1}$ are two sequences of vertices and paths in $X_0$ such that for every $i \in \{0,\ldots,k-1\}$ we have that $v_i$ and $v_{i+1}$ are out-endvertex and in-endvertex of $P_i$, respectively (all indices modulo $k$).
 For each $i \in \{0,\ldots,k-1\}$ we have $\deg(v_i) \in \{2,4\}$, as we add only paths with such in-endvertices to $X_0$ in step~\ref{enum:step-1} and~\ref{enum:step-2}.
 Moreover, $v_i$ is out-endvertex of $P_{i-1}$ and thus we have that $\deg(v_i) \neq 2$.
 Hence for each $i \in \{0,\ldots,k-1\}$ we have $\deg(v_i) = 4$ and $P_i$ was put into $X_0$ in step~\ref{enum:step-2} because $v_{i+1}$ was the out-endvertex of exactly three already chosen paths.
 Assume without loss of generality that $P_0$ was the path that was put into $X_0$ in step~\ref{enum:step-2} first among the paths $P_0,\ldots,P_{k-1}$.
 This means that the path $P_1$, whose out-endvertex is $v_1$, was already put into $X_0$.
 This contradicts that $P_0$ was the first and proves that $X_0$ is acyclic.
 
 Now, $X_1,X_3 \subseteq X_0$ are acyclic by Lemma~\ref{lem:easy-properties}~\ref{enum:easy-3}.
 Moreover, $X_2 = X_0^+(v) \cup X_0^+(w_0) \cup \{P\}$ is acyclic, because $X_0^+(v),X_0^+(w_0) \subseteq X_0$, $P$ has in-endvertex $w_0$ and out-endvertex $v$, and (in this case) $v$ is not reachable from $w_0$ in $X_0$.
 Finally, $X_4 = X_0^+(w_0) \cup \{P'\}$ is acyclic, because $X_0^+(w_0) \subseteq X_0$ and $V(P') \cap \left(\bigcup_{P \in X_0} V(P)\right) = \{w_0\}$.
 
 \medskip
 
 \noindent
 \ref{enum:X_1-nice},~\ref{enum:X_3-almost-nice}:  
 Consider $X_0$.
 As mentioned earlier, $\P$ satisfies~\ref{enum:degree-2},~\ref{enum:degree-3} and~\ref{enum:degree-5} and by Lemma~\ref{lem:easy-properties}~\ref{enum:easy-2} so does $X_0$.
 Moreover, by definition $X_0$ satisfies~\ref{enum:bidirected} and~\ref{enum:degree-4}, thus $X_0$ is nice.    Consider $X_1$ and $X_3$.
 By Lemma~\ref{lem:easy-properties}~\ref{enum:easy-4} we have that $X_1$ and $X_3$ are almost nice, as both are defined as $X_0^+(w_0)$ with root $w_0$.
 By construction, $\outdeg_{X_3}(w_0) = 3$ and $\indeg_{X_3}(w_0) = 0$, which proves~\ref{enum:X_3-almost-nice}.
 For $X_1$ note that, if $\deg(w_0)=4$, then~\ref{enum:degree-4} holds for $w_0$ in $X_1$, and if $\deg(w_0)=5$, then~\ref{enum:degree-5} holds for $w_0$ since $X_1 \subseteq X_0 \subseteq \P$.
 Thus $X_1$ is nice.
  
 \medskip

 \noindent
 \ref{enum:X_2-nice},~\ref{enum:X_2-almost-nice}:
 Next consider $X_2 = X_0^+(v) \cup X_0^+(w_0) \cup \{P\}$ with root $v$ and path $P$ as defined above, see the middle part of Figure~\ref{fig:our-configurations}.
 Each of $X_0^+(v)$, $X_0^+(w_0)$ is almost nice by Lemma~\ref{lem:easy-properties}~\ref{enum:easy-4} and the niceness of $X_0$.
 We have neither $w_0 \to_{X_0} v$ (by assumption) nor $v \to_{X_0} w_0$ (as $\indeg_{X_0}(w_0) = 0$).
 So $X_0^+(v) \cup X_0^+(w_0)$ satisfies~\ref{enum:bidirected}--\ref{enum:degree-5}, except perhaps for $w_0$ and $v$.
 As $X_2$ additionally contains the path $P$ from $v$ to $w_0$, we have that~\ref{enum:degree-4} is satisfied for $w_0$ and thus $X_2$ is nice when $\deg(v) = 2$ and almost nice when $\deg(v) = 4$.
 Because $X_0$ is nice, i.e., satisfies~\ref{enum:degree-2}, we have $\outdeg_{X_0}(v) = 0$ when $\deg(v) = 2$.
 As $X_2 - P \subseteq X_0$ and $P$ is outgoing at $v$, we have $\outdeg_{X_2}(v) = 1$.
 
 \medskip
 
 \noindent
 \ref{enum:X_4-almost-nice}:
 The system $X_4 = X_0^+(w_0) \cup \{P'\}$ is almost nice, because $P'$ and $X_0^+(w_0)$ share only vertex $w_0$, $X_0^+(w_0)$ is almost nice by Lemma~\ref{lem:easy-properties}~\ref{enum:easy-4}, and $P'$ is incoming at $w_0$.
 
 \medskip
 
 \noindent
 \ref{enum:overall-degrees}:
 These properties are corollaries of the almost-niceness of $X$ and the considerations for the root in the previous items.
\end{proof}

\paragraph{Coloring reducible configurations based on $X_1,X_2,X_3,X_4$.}

Recall that a coloring is good if each monochromatic component is a path of length at most $14$.
A \emph{reducible configuration} is a non-empty subgraph $H$ of $G$, such that any good $L$-coloring of $G - V(H)$ (which exists by the minimality of $G$) can be extended to a good $L$-coloring of $G$ in which every edge between a vertex in $H$ and a vertex outside of $H$ is colored properly.
Showing that $G$ has a reducible configuration will conclude the proof of Theorem~\ref{thm:girth-6}.
For convenience we say that $X_i$ is reducible if the subgraph $H$ of $G$ induced by the vertices in $X_i$ is a reducible configuration, $i=1,2,3,4$.

\begin{lemma}\label{lem:reducible-general}
 Each of $X_1,X_2,X_3,X_4$ is reducible, whenever it is defined.
\end{lemma}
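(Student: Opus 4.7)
Fix $j \in \{1,2,3,4\}$ for which $X_j$ is defined, and let $H$ denote the subgraph of $G$ induced by the vertex set $V(X_j)$. By the vertex-minimality of $G$, the graph $G - V(H)$ admits a good $L$-coloring $c^\ast$. The plan is to extend $c^\ast$ to a good $L$-coloring of all of $G$ in which every edge across the boundary of $V(H)$ is properly coloured---exactly what is required for $X_j$ to be reducible.

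By Lemma~\ref{lem:our-configurations}\ref{enum:overall-degrees}, every vertex $v \in V(H)$ has at most one neighbour outside $V(H)$; if such a neighbour exists, call its colour the \emph{forbidden colour} $f(v)$, and set $\widetilde L(v) := L(v) \setminus \{f(v)\}$; otherwise put $\widetilde L(v) := L(v)$. Insisting on $c(v) \in \widetilde L(v)$ automatically makes all boundary edges of $H$ properly coloured, so any monochromatic path in the resulting coloring lies either entirely in $V(G) \setminus V(H)$---and has length at most $14$ by the goodness of $c^\ast$---or entirely inside $V(H)$. It therefore suffices to extend $c^\ast$ to $V(H)$ subject to $c(v) \in \widetilde L(v)$ while producing no monochromatic path of length exceeding $14$ inside $V(H)$.

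I would colour the vertices of $V(H)$ by processing the paths of $X_j$ in a reverse topological order enabled by the acyclicity of $X_j$ (Lemma~\ref{lem:our-configurations}\ref{enum:acyclic}). Along each path $P = v_0 v_1 \cdots v_\ell$ of $X_j$ (where $\ell \leq 8$ by Lemma~\ref{lem:face-length}), I would select colours in $\widetilde L(v_i)$ that alternate so that consecutive vertices along $P$ receive distinct colours whenever possible. Since $|L(v_i)| = 2$, the alternation can fail only where $\widetilde L(v_i)$ is a singleton conflicting with the prescribed colour, or at a vertex already committed by an earlier-processed path. The structural constraints from Lemma~\ref{lem:our-configurations} tightly restrict where these conflicts arise: every inner vertex of $X_j$ has degree $3$ and hence at most one ``third'' neighbour possibly lying in $V(H)$; every endvertex has in- and out-degrees dictated by~\ref{enum:degree-2}--\ref{enum:degree-5}; and the root $r$ plays a special role (no outside neighbour for $X_1$ in Case~1, an auxiliary path to reconcile for $X_2$ and $X_4$). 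A case analysis according to $j \in \{1,2,3,4\}$ then handles the root and the junctions between paths by selecting a remaining admissible colour that preserves the local alternation pattern.

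\emph{Main obstacle.} The bulk of the work lies in the bookkeeping that verifies no monochromatic path inside $V(H)$ reaches length $15$. The alternation breaks at most a bounded number of times per path of $X_j$, so a monochromatic subpath within any single $P \in X_j$ has small constant length; the danger is that shared endvertices between paths of $X_j$, or ``third'' edges at inner vertices, may concatenate such monochromatic pieces across multiple paths. Bounding the total length amounts to exploiting the degree patterns granted by properties~\ref{enum:bidirected}--\ref{enum:degree-5} at the junctions, together with the bound $\ell \leq 8$ on each $P \in X_j$, to conclude that any two monochromatic pieces joined at a junction together use at most $14$ edges. This final accounting is the hardest part and must be carried out separately for each of the four rooted path systems, with $X_2$ and $X_4$ requiring extra care at the auxiliary path added to $X_0^+(w_0)$.
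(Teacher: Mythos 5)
Your opening moves coincide with the paper's: take a good $L$-coloring of $G - V(H)$ by minimality, colour each vertex of $H$ that has an (unique, by Lemma~\ref{lem:our-configurations}\ref{enum:overall-degrees}) outside neighbour away from that neighbour's colour, and observe that then every monochromatic component lies wholly inside or wholly outside $V(H)$. But the heart of the lemma is missing. The mechanism you propose for colouring $V(H)$ --- process the paths of $X_j$ in reverse topological order and ``alternate'' colours so consecutive vertices on a path differ whenever possible --- is neither achievable in general nor the engine behind the bound $14$. With lists of size $2$, a vertex is simultaneously constrained by its outside neighbour, by its third neighbour inside $H$, and by the other path(s) of $X_j$ through it (edges may lie on two paths, and endvertices on several), so properness along a path cannot be enforced; your claim that ``the alternation breaks at most a bounded number of times per path'' has no argument behind it, and nothing in your scheme prevents monochromatic runs from chaining through many endvertices (for instance a degree-$2$ endvertex is the in-endvertex of two paths, and an out-endvertex can emit several paths), so the crucial claim that ``any two monochromatic pieces joined at a junction together use at most $14$ edges'' --- which you yourself defer as ``the hardest part'' --- is exactly the content of the lemma and is not done.

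The paper's actual mechanism is different and is what you would need to reconstruct: inner (degree-$3$) vertices are coloured against their neighbour \emph{off} the path currently being processed, so that every monochromatic component of $H'-W$ stays inside a \emph{single} path of $X_j$ (it may well be the entire path of up to $8$ edges; no attempt is made to keep pieces short within a path). The endvertices in $W$ are then coloured by a greedy procedure (Rules~1--3, driven by the sets $S(u)$ of in-path neighbours), whose termination is precisely where the acyclicity of $X_j$ (Lemma~\ref{lem:our-configurations}\ref{enum:acyclic}) is used, and which guarantees that at each endvertex the same-coloured path-neighbours come either from one path with in-endvertex $u$ or from at most two paths with out-endvertex $u$; property~\ref{enum:degree-4} is what keeps the lists large enough at degree-$4$ endvertices, and the root (degree up to $5$, the edge $e^*$ for $X_3$, the two extra edges at the root for $X_4$) needs the separate Rule~3 analysis. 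This is the structural fact that confines every monochromatic component to the union of at most two paths of $X_j$, and the final count $14 = 7+7$ uses that an induced monochromatic path cannot contain all $8$ edges of a facial path. None of this is present in your proposal, so as it stands there is a genuine gap rather than an alternative proof.
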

\begin{proof}
 Consider $j \in \{1,2,3,4\}$.
 Let $r$ be the root of $X_j$, $H$ be the subgraph of $G$ consisting of all vertices and undirected edges in the path system $X_j$.
 Let $V_1$ be the set of vertices of $H$ and $H'$ be the subgraph of $G$ induced by $V_1$, i.e., $H\subseteq H'$.
 Let $W \subseteq V_1$ be the set of endvertices of $X_j$.
 Recall, that by Lemma~\ref{lem:our-configurations}\ref{enum:overall-degrees}, if $w \in W - \{r\}$, then  $\deg(w) \in \{2,4\}$, $\deg(r) \in \{2,3,4,5\}$, and if $u \in V_1 - W$, then $\deg(u) = 3$.
 In addition, the niceness or almost niceness of $X_j$ and the degree conditions for $r$ given in Lemma~\ref{lem:our-configurations} any vertex from $V_1$ has at most one neighbor not in $V_1$ and each vertex in $V_1-\{r\}$ has at most one incident edge from $E(G) - E(H)$.
 In particular, $E(H') - E(H)$ is a matching, unless $j = 4$, in which case $E(H') - E(H)$ might contain two edges incident to $r$.
 In case $j \neq 3$, let $E_1 = E(H') - E(H)$.
 Otherwise (when $j = 3$) let $e^* = ru^*$ denote the unique edge in $E(H') - E(H)$ incident to the root $r$ and let $E_1 = E(H') - (E(H) \cup e^*)$.
 In Figure~\ref{fig:our-configurations} on the right we have $r = w_0$ and $u^* = w_1$.
 Note that if $j \in \{1,2\}$, then there are no edges from $E(H')-E(H)$ incident to $r$.

%

We shall be coloring different sets of vertices of $G$ one after another.

 \begin{itemize}[leftmargin=1em]
  \item First we make a good $L$-coloring $c'$ of $G - V_1$, which exists by the minimality of $G$.
  Note that $G - V_1$ might be empty.
 \end{itemize}
 

We shall color $V_1$ so that no vertex in $V_1$ has the same color as its neighbor (if exists) in $V(G)-V_1$ and such that each monochromatic path with vertices in $V_1$ is contained in the union of two paths from $X_j$.

 \begin{itemize}[leftmargin=1em]
  \item Consider $A \subseteq V_1$, the set of vertices that have a neighbor in $V(G)-V_1$.
  As $X_j$ satisfies~\ref{enum:degree-4} no vertex of degree $4$ in $H'$ is in $A$, except for possibly the root $r$.
  We color each vertex $v\in A$ such that its color is from $L(v)$ and differs from the color of its neighbor in $V(G)-V_1$.
 \end{itemize}

 Now, no matter how we color $V_1 - A$, each monochromatic path has all its vertices completely in $V_1$ or completely in $V(G)-V_1$.
 Since the coloring of $V(G)-V_1$ is good, each monochromatic path there has length at most $14$.
 So, we only need to color $V_1-A$ so that each monochromatic path with vertices in $V_1$ has length at most $14$.

 \begin{itemize}[leftmargin=1em]
  \item Consider the vertices of $E_1$.
   First assume $r \in V(E_1)$, this could be only if $j = 2$ or $j = 4$.
   If $r \in A$, then $r$ is already colored and if $r \notin A$, we give $r$ any color from its list.
   Next, we color every neighbor of $r$ in $E_1$ with a color from its respective list different from the color of $r$.
   Finally, we color the remaining vertices of $E_1$ from their lists such that each edge of $E_1$ has endpoints of different colors.
   If $r \not\in V(E_1)$, i.e., $E_1$ is a matching, color $V(E_1)$ such that each edge is colored properly. 
 \end{itemize}
 
 \noindent
 This ensures that eventually every monochromatic component of $H'$ is a subgraph of $H$ or $H \cup e^*$ in case $j = 3$.
 
 \begin{itemize}[leftmargin=1em]
  \item Consider the set $B$ of vertices from $V_1-A$ not incident to $E_1$ and of degree $3$.
   Note that $r \notin B$ because it is either of degree different from $3$ or is incident to $E_1$ in case when $j=3$.
   Hence $B$ consists only of inner vertices of $X_j$, i.e., $B = V_1 - (A \cup V(E_1) \cup W)$.
   For any $u \in B$ all three edges incident to $u$ are in $H$, so $u$ lies on at least two paths in $X_j$.
   We consider the paths in $X_j$ in any order and when we process a path $P \in X_j$, we color the vertices in $B \cap V(P)$.
   For the current path $P$ and the current vertex $u \in B \cap V(P)$, consider the neighbor $u'$ of $u$ not in $P$.
   If $u'$ is not colored, color $u$ arbitrarily from its list.
   Otherwise, color $u$ with a color different from the color of $u'$.
 \end{itemize}
 
 \noindent
 This ensures that every monochromatic component of $H' - W$ is completely contained in some path in $X_j$.
 It remains to color the vertices in $W - A$ and in $e^* = ru^*$ (if it exists) in such a way that $e^*$ is not monochromatic and at most two monochromatic components of $H'-W$ are part of the same monochromatic component of $H'$.
 
 \begin{itemize}[leftmargin=1em]
  \item Consider the vertices in $W - A$ and the vertex $u^*$ (if it exists).
   Recall that $u^*$ is an inner vertex of some path in $X_j$ and hence $u^* \notin W$.
   For each $u \in W-A$, consider the paths in $X$ with in-endvertex $u$ and let $S(u)$ be the set of immediate neighbors of $u$ on those paths, i.e., $v \in S(u)$ if $uv \in E(H)$ and $u$ is the in-endvertex of the path in $X_j$ containing $uv$.
   In particular, $S(r) = \emptyset$, and for $u \neq r$ we have $|S(u)| = 1$ if $\deg(u) = 4$ and $|S(u)| = 2$ for $\deg(u) = 2$.
   Additionally let $S(u^*) = \{r\}$ when considering $X_3$.
   We apply the following rules to still uncolored vertices (initially the set $(W-A) \cup \{u^*\}$) as long as any of these is applicable:
   
   \noindent
   \textbf{Rule 1}: If for some uncolored vertex $u$ three of its neighbors have the same color $a$, we color $u$ with a color in $L(u)$ different from $a$.
   
   \noindent
   \textbf{Rule 2}: If Rule~1 does not apply, but for some uncolored vertex $u$ some $u' \in S(u)$ is already colored, we color $u$ with a color from its list different from the color of $u' $.
   
   \noindent
   \textbf{Rule 3}: If neither Rule~1 nor Rule~2 applies, and the root $r$ is uncolored, consider the set of colors appearing on $N(r)$ and a color $a$ that is repeated the most in $N(r)$.
   Let $b\in L(r)-\{a\}$.
   Then $b$ is repeated at most twice in $N(r)$ since $|N(r)|\leq 5$.
   Moreover, $b$ is repeated at most once in $N(r)$ if $|N(r)|=2$ or $3$.
   Assign color $b$ to $r$.
 \end{itemize}
 
 We claim that if none of the three rules applies, then all vertices are colored.
 Indeed, if neither Rule~1 nor Rule~2 applies and some vertex $u_1$ is uncolored, we have that $u_1\neq r$ and $S(u_1)$ is uncolored, which implies $S(u_1) \subseteq (W-A) \cup \{u^*, r \}$.
 Let $u_2$ be any vertex in $S(u_1)$.
 So, $u_1, u_2 \in W$ and thus $u_2u_1$ is a path of length $1$ in $X$ with in-endvertex $u_1$ and out-endvertex $u_2$.
 As $u_2$ is uncolored and Rule~2 does not apply we have that $S(u_2)$ is uncolored.
 Continuing this way we obtain a sequence $u_1,u_2,\ldots$ of uncolored vertices such that for each $i=1,2,\ldots$,  $u_{i+1} \in S(u_i)$ and $u_{i+1}u_i$ is a path of length $1$ in $X$ with in-endvertex $u_i$ and out-endvertex $u_{i+1}$.
 As $G$ is finite, we have $u_i = u_k$ for some $i < k$, which contradicts Lemma~\ref{lem:our-configurations}\ref{enum:acyclic}, stating that $X_j$ is acyclic.
 This shows that if none of Rule~1, Rule~2, Rule~3 applies, then all vertices in $H'$ are colored.
 So, applying Rule~1--Rule~3 as long as possible colors all the remaining vertices of $G$.
 
 \medskip 
 
 Next we shall show that the produced coloring is good, or more specifically that each monochromatic components of $H'$ is a subpath of the union of two paths from $X_j$.
 Rule~1 and Rule~3 ensure that every vertex $v \in W - A$ has at most two neighbors in the same color as $v$.
 If $u^*$ exists, then $\deg(u^*) = 3$, and hence Rule~2 ensures that $e^* = ru^*$ is colored properly.
 Moreover, for every vertex $u \in W$ let $X(u)$ be the set of paths $P$ in $X_j$ containing $u$, for which the neighbor of $u$ in $P$ has the same color as $u$.
 Then Rule~1 and Rule~2 ensure that $X(u) = \emptyset$, or $X(u)$ consists of exactly one path with in-endvertex $u$, or $X(u)$ consists of at most two paths, both with out-endvertex $u$.
 
 Recall that we colored the vertices in $A$ so that no vertex in $X_j$ has a neighbor outside of $X_j$ in the same color.
 Moreover, we colored $V(E_1) \cup B \cup \{u^*\}$ in such a way that every monochromatic component of $X_j - W$ is completely contained in a path of $X_j$.
 Finally, we colored the vertices in $W$ so that every monochromatic component of $X_j$ is the union of at most two monochromatic components of $X_j - W$.
 Together this implies that every monochromatic component is contained in the union of at most two paths in $X_j$.
 
%
%
%

 To summarize, we see that our coloring is good on $V(G)-V_1$.
 Now, each path of $X_j$ is facial, i.e., has at most $8$ edges by Lemma~\ref{lem:face-length} and each monochromatic component in $V_1$ is a path contained in the union of some two paths from $X_j$.
 This monochromatic path has length at most $14$, because it is induced and hence contains at most $7$ edges from each of the two paths.
 So, our coloring is good on $V_1$.
 Finally, since no vertex of $V_1$ has the same color as its neighbor (if exists) in $V(G)-V_1$, the vertices of each monochromatic component are completely contained in $V_1$ or in $V(G)-V_1$.
 Thus the coloring is good.
 This concludes the proof of Lemma~\ref{lem:reducible-general} saying that $X_j$, $j=1,2,3,4$, is reducible.
\end{proof}

To conclude the proof of Theorem~\ref{thm:girth-6}, we see that Lemma~\ref{lem:reducible-general} shows that $G$ has a reducible configuration, contradicting the fact that $G$ is a minimal counterexample.

\section{Proof of Theorem~\ref{thm:girth-4-LB}}\label{sec:girth-4}

For every integer $t \geq 2$ we define two planar graphs of girth $4$, denoted by $A_t$ and $B_t$, respectively.
The graph $A_t$ consists of a path $P_t$ on $t$ vertices and two special vertices $u$ and $w$, such that the vertices along $P_t$ are joined by an edge alternatingly to $u$ and $w$.
For example, $A_2$ is a path on $4$ vertices and the left of Figure~\ref{fig:girth-4-LB-graphs} shows $A_5$.
The graph $B_t$ consists of $A_t$ with special vertices $u$ and $w$, and for every neighbor $v$ of $u$ there is another copy of $A_t$, with special vertices being identified with $v$ and $w$, respectively.
See the middle of Figure~\ref{fig:girth-4-LB-graphs}.
 
Note that for every $t \geq 2$ the graph $B_t$ has girth $4$ and the two special vertices $u$ and $w$ are at distance $3$ (counted by the number of edges) in $B_t$.

\begin{figure}[htb]
 \centering
 \includegraphics{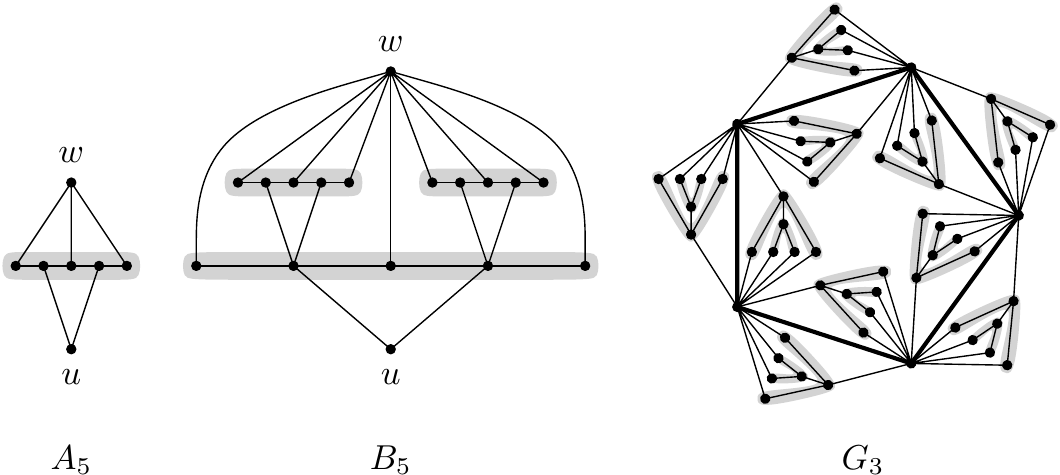}
 \caption{The graph $A_5$, $B_5$ and $G_3$.}
 \label{fig:girth-4-LB-graphs}
\end{figure}

 We construct $G_t$ inductively.
 For $t=2$, we define $G_t$ to be the $5$-cycle.
 Clearly, in any $2$-vertex coloring of $G_2$ there is a monochromatic $P_2$.
 
 For $t \geq 3$, let $G$ be a copy of $G_{t-1}$.
 We obtain $G_t$ from $G$ by considering every edge $xy$ in $G$, taking two copies $B,B'$ of $B_t$ with special vertices $u,w$ and $u',w'$, respectively, and identifying $x,u'$ and $w$, as well as $y,u$ and $w'$.
 Note that $G_t$ has girth $4$ and is indeed planar: We can embed $B$ and $B'$ on different ``sides'' of the edge $xy$, as in the right of Figure~\ref{fig:girth-4-LB-graphs}.
 
 Now, fix any $2$-vertex coloring of $G_t$ and consider the inherited coloring of $G_{t-1}$.
 By induction hypothesis there is a monochromatic copy $Q$ of $P_{t-1}$ in $G_{t-1}$, say in color $1$.
 Let $x$ be an endpoint of $Q$ and $y$ be the neighbor of $x$ in $Q$.
 Consider the copy $B$ of $H_t$ where $x$ is identified with $w$ and $y$ is identified with $u$ and the copy $A$ of $A_t$ in $B_t$ with special vertices $x=w$ and $u=y$.
 
 If the copy $P$ of $P_t$ in $A$ is not monochromatic, then at least one vertex $v$ of $P$ has color $1$.
 If $v$ is a neighbor of $x=w$, we have can extend $Q$ by $v$ into monochromatic $P_t$ in color $1$.
 Otherwise, $v$ is a neighbor of $u=y$ and we consider the copy $A'$ of $A_t$ with special vertices $u'=v$ and $w' = x$.
 Again, if the copy $P'$ of $P_t$ in $A'$ is not monochromatic, then at least one vertex $v'$ of $P'$ has color $1$.
 If $v'$ is a neighbor of $x$, then $Q \cup v'$ is a monochromatic $P_t$ in color $1$.
 Otherwise $v'$ is a neighbor of $v$, and $Q \cup \{v,v'\}$ forms a monochromatic $P_t$ in color $1$.
\qed



\section{Conclusions and open questions}\label{sec:conclusions}

In this paper, we proved that for any planar graph of girth $6$ and any assignment of lists of $2$ colors to each vertex, there is a coloring from these lists such that monochromatic components are paths of lengths at most $14$.
This extends a corresponding recent result of Borodin \textit{et al.}~\cite{BKY13} for planar graphs of girth $7$.
Our result can be interpreted as a statement about linear arboricity with short paths.

The proof uses discharging and reducible configurations.
Compared to most of the previous discharging proofs, where the reducible configurations are small, here, the reducible configuration can be arbitrarily large.
A similar approach was used by Havet and Sereni~\cite{HS06}, who argued that every graph of maximum average degree less than $3$ (which includes planar graphs of girth at least $6$) has a $2$-defective $2$-list-coloring.
The main difference between this proof and the proof of Theorem~\ref{thm:girth-6} is that Havet and Sereni can assume that in a minimal counterexample every edge is incident to a vertex of degree at least $4$.
Indeed, if there are two adjacent vertices $u,v$ of degree at most $3$ each, then a $2$-defective $2$-list-coloring of $G - \{u,v\}$ can easily be extended to a $2$-defective $2$-list-coloring of $G$.
Such a reduction does not work in our case, since we can not bound the length of a longest monochromatic path.
The reducible configurations of Havet and Sereni are not only simpler (they contain no vertices of degree $3$), with their coloring of such a configuration one can get arbitrarily long monochromatic paths.
Thus, our Lemma~\ref{lem:reducible-general} requires less and proves more then the corresponding statement of Havet and Sereni~\cite[Lemma 2]{HS06}.
 
According to Table~\ref{tab:overview} the remaining open questions concern $2$-colorings of planar graphs of girth $5$ or $6$.
Figure~\ref{fig:girth-5-example} shows a planar graph of girth $5$ that contains a monochromatic $P_3$ in every $2$-coloring, i.e., $k_d(5,2) \geq 2$, $k_f(5,2) \geq 3$, and $k_p(5,2) \geq 4$.
Indeed, we may assume without loss of generality, that in a given $2$-coloring vertices $u$ and $v$ both have color $1$.
Then $u_i$ and $v_i$, for $i = 1,2,3$, have color $2$ or there is a monochromatic $P_3$ in color $1$.
Similarly, $w_1,w_2,w_3$ have color $1$ or there is a monochromatic $P_3$ in color $2$.
But then these three vertices form a monochromatic $P_3$ in color $1$.

\begin{figure}[htb]
 \centering
 \includegraphics{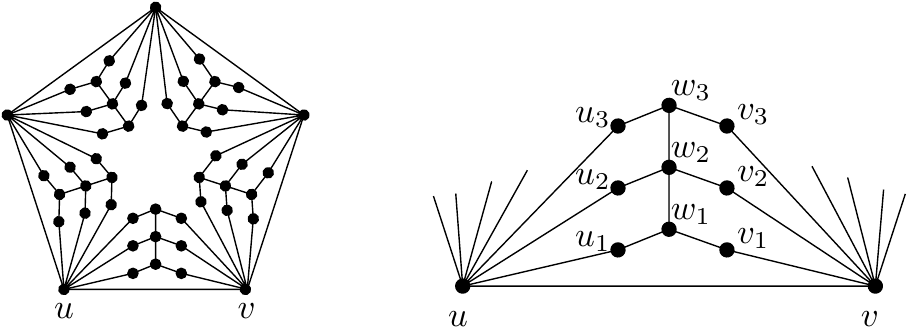}
 \caption{Every $2$-coloring contains a monochromatic path on $3$ vertices.}
 \label{fig:girth-5-example}
\end{figure}

However, to the best of our knowledge, it is open whether $k_f(5,2)$ and $k_p(5,2)$ are finite.
On the other hand, it is still possible that every planar graph of girth $5$ and $6$ admits a $2$-coloring where every monochromatic component is a subgraph of $P_3$ and $P_2$, respectively.

%

\bibliographystyle{abbrv}
\bibliography{literature}

\end{document}